\newcommand{\df}{{\rm d}}
\newcommand{\R}{\mathds R}
\newcommand{\N}{\mathds N}
\title[Conformally standard stationary spacetimes and Fermat metrics]{Conformally standard 
stationary spacetimes and Fermat metrics}
\author[M. A. Javaloyes]{Miguel Angel Javaloyes}
\address{Departamento de Matem\'aticas, \hfill\break\indent
Universidad de Murcia, \hfill\break\indent
Campus de Espinardo,\hfill\break\indent
30100 Espinardo, Murcia, Spain}
\email{majava@um.es}
\date{15.11.2011}
\thanks{The author is partially supported by
Regional J. Andaluc\'{\i}a Grant P09-FQM-4496, by  MICINN project MTM2009-10418 and
Fundaci\'on S\'eneca project 04540/GERM/06.}
\subjclass[2010]{}
\keywords{Stationary spacetimes, lightlike geodesics, Randers metrics, Fermat principle, Finsler metrics}
\begin{document}

\newtheorem{thm}{Theorem}[section]
\newtheorem{prop}[thm]{Proposition}
\newtheorem{lemma}[thm]{Lemma}
\newtheorem{cor}[thm]{Corollary}
\theoremstyle{definition}
\newtheorem{defi}[thm]{Definition}
\newtheorem{notation}[thm]{Notation}
\newtheorem{exe}[thm]{Example}
\newtheorem{conj}[thm]{Conjecture}
\newtheorem{prob}[thm]{Problem}
\newtheorem{rem}[thm]{Remark}
\begin{abstract}
In this review, we collect several results for conformally standard stationary spacetimes $(S\times\R,g)$
obtained in terms of a Finsler metric of Randers type on the orbit manifold $S$ that we call Fermat metric. This metric is obtained by applying the relativistic Fermat principle and it turns out that 
it encodes all the causal aspects of the spacetime.
\end{abstract}
\maketitle

\section{Introduction}
Fermat's principle, say, that light rays minimize the arrival time, is linked to General Relativity from its very beginning. As early as 1917, H. Weyl established a version for static spacetimes in \cite{weyl17} and several other authors, as T. Levi-Civita and
J. L. Synge \cite{LeCi18,syn25}, gave some attention to the principle. Not much later, in 1927, T. Levi-Civita stated the stationary version in \cite{LeCi27} (see also \cite{Pham57}), and 
it was included in the book \cite{LauLif62}. The general version was formulated by I. Kovner in 1990 \cite{Kov90} and rigorous established by V. Perlick in \cite{Per90} (see also \cite{Per08} for a version in Finsler spacetimes). 

Independently from Fermat's principle, Randers metrics appeared as an attempt of G. Randers to geometrize electromagnetism in General Relativity \cite{Ran41}, but it seems that it was R. Ingarden the first one that thought
in Randers metrics as Finsler ones in his PhD thesis \cite{Ing57}. By the way, R. Miron \cite{Mir04} suggested to name  the Randers metrics endowed with a non-linear Lorentz connection (associated to the Lorentz equation in electrodynamics) as {\em Ingarden spaces}.  Afterwards they were recovered by
M. Matsumoto with the aim of giving examples of the so-called C-reducible Finsler metrics. In order to obtain these examples, he introduced the class of $(\alpha,\beta)$-metrics in a manifold $M$, that is, Finsler metrics that are obtained as a homogeneous combination of the square root of a Riemannian metric $h$  and a one-form $\beta$ on $M$  (with the notation $\alpha(v)=\sqrt{h(v,v)}$ for $v\in TM$) \cite{Mat72}. In particular, Randers metrics are defined as
$\alpha+\beta$.
This function is positively homogeneous but not reversible. Moreover, it is positive whenever the $h$-norm of $\beta$ is less than $1$ in every point.
Subsequently, the Japanese school of Finsler Geometry spent some time studying Randers metrics, mostly problems related with curvature \cite{Mat74,SSAY77,YaSh77}. Let us point out that the approach of G. Randers himself was somewhat different, since he constructed his metric from a Lorentzian metric and a one-form in the spacetime. It is also remarkable that A. Lichnerowicz and Y. Thiry obtained a Randers
metric when studying Jacobi-Maupertuis principle in General Relativity (see \cite{LiTh47} and \cite[pg. 155]{Li55}).

In this review, we will describe some recent results that use techniques of Finsler geometry to study
conformally standard stationary spacetimes and vice versa. 
\section{Finsler and Randers metrics\label{FinslerMetrics}} There are several definitions of Finsler metrics \cite{JaSan11}. 
But the most general case where you can extend most of the classic Riemannian results is the following. 
Let $\pi:TM\rightarrow M$ be the natural projection from the tangent bundle to the manifold.
A {\em Finsler metric} is a continuous function $F:TM\rightarrow [0,+\infty)$ satisfying the following properties:
\begin{enumerate}
\item $F$ is $C^\infty$ in $TM\setminus 0$, i. e. away from the zero section,
\item $F$ is fiberwise positively homogeneous of degree one, i. e. $F(\lambda v)=\lambda F(v)$ for every $v\in TM$ and $\lambda>0$,
\item $F^2$ is fiberwise strongly convex, i.e., the fundamental tensor $g_u$ defined as
\begin{equation}\label{fundamentaltensor}
g_u(v,w)=\frac{\partial^2}{\partial s\partial t}F^2(u+tv+sw)|_{t,s=0},
\end{equation}
where $u\in TM\setminus 0$ and $v,w\in T_{\pi(u)}M$, is positively defined for every $u\in TM\setminus 0$.
\end{enumerate}
These conditions imply that $F$ is positive away from the zero section,  
the triangle inequality holds for $F$ in the fibers (see \cite[Section 1.2B]{BaChSh00})
and $F^2$ is $C^1$ \cite{Warner65}. Property $(3)$ above is
essential to guarantee minimization properties of geodesics. The first geometers that worked with Randers metrics seemed very concerned with computation of curvatures and invariants related with connections and, apparently, they
overlooked the question of strong convexity. Let us recall that a Randers metric on a manifold
$M$ is constructed using a Riemannian metric $h$ and a one-form $\beta$ on $M$ as
\begin{equation}
\label{Randers}
R(v)=\sqrt{h(v,v)}+\beta(v)
\end{equation}
for every $v\in TM$. It turns out that it
is fiberwise strongly convex if and only if it is positive for every $v\in TM$. 
This can be easily seen
computing the fundamental tensor (see \cite[Corollary 4.17]{JaSan11}):
\[g_v(w,w)=\frac{R(v)}{\sqrt{h(v,v)}}h(w,w)+
\left(\frac{h(v,w)}{\sqrt{h(v,v)}}+\beta(w)\right)^2,\]
with $v\in TM\setminus 0$ and $w\in T_{\pi(v)}M$.
Up to our knowledge, the first time that a proof of this fact appeared was in \cite[Section 11.1]{BaChSh00}
published in 2000.

Positive homogeneity of Finsler metrics implies that the length of a piecewise smooth curve $\gamma:[a,b]\subseteq \R
\rightarrow M$ given by
\[\ell_F(\gamma)=\int_a^b F(\dot\gamma) ds\]
 does not depend on the orientation preserving parametrization of the curve. Then you can define the distance between two points $p,q\in M$ as
 \[d(p,q)=\inf_{\gamma\in C_{p,q}}\ell_F(\gamma),\]
where $C_{p,q}$ is the space of piecewise smooth curves from $p$ to $q$. This gives a generalized distance (see \cite[pg. 5]{Zau} and also \cite{FHS10b,JaLiPi11}), but not necessarily reversible as the length of a curve
 depends on the orientation of the parametrization (observe that in general $F(-v)\not=F(v)$). Then
 you can define two kind of balls, that is, forward and backward balls
, respectively, as
\[B_F^+(p,r)=\{q\in M: d_F(p,q)<r\},\quad B_F^-(p,r)=\{q\in M: d_F(q,p)<r\},\]
for every $p\in M$ and $r>0$. Moreover, there exist several definitions for Cauchy sequences.
\begin{defi}\label{Cauchy}
 A sequence $\{x_n\}_{n\in\N}$ is called a {\em forward (resp. backward) Cauchy sequence} if
 for any $\varepsilon>0$, there exists
 $N\in\N$ such that $d_F(x_i,x_j)<\varepsilon$ for any $i,j\in\N$ satisfying $N<i<j$ (resp. $N<j<i$).
\end{defi}
 Moreover, you can also define the energy functional as
 \[E_F(\gamma)=\frac 12\int_a^b F(\dot\gamma)^2 ds\]
for every piecewise smooth curve $\gamma:[a,b]\subseteq \R
\rightarrow M$,  and geodesics as critical points of this functional. In particular, geodesics must have constant speed
 (see for example \cite[Proposition 2.1]{CJM11}). Let us point out that in some references as \cite{BaChSh00} geodesics are defined as critical points of the length functional and as a consequence
 they are not assumed to have constant speed.
\section{Fermat's principle in conformally standard stationary spacetimes}
Let us recall that a conformally stationary spacetime is a Lorentz manifold $(M,g)$ that admits a timelike conformal vector field $K$. We refer to the classical books \cite{BeEr,O'N83} for the basic definitions on Lorentzian Geometry and Causality. Observe that $K$ determines a time-orientation in $(M,g)$ and thus, a spacetime 
that, with an abuse of notation, we will denote also by $(M,g)$. It can be shown that when $K$ is complete and the spacetime is distinguishing (see \cite{JaSan08}), then $(M,g)$ splits as a {\em 
conformally standard stationary spacetime}, that is, $M=S\times \R$ and the metric $g$ can be expressed as
\begin{equation}\label{e1conf} g((v,\tau),(v,\tau))=\varphi(g_0(v,v)+2 \omega(v)\tau-\tau^2),
\end{equation}
in $(x,t)\in S\times\R$, where $(v,\tau)\in T_xS\times \R$, $\varphi$ is a smooth positive function on $S\times\R$ and  $\omega$ and $g_0$ are respectively a one-form and a Riemannian metric 
on the manifold $S$. In this setting, the vector field $K$ is induced from the natural
lifting to $M$ of the canonical vector field $ d/dt$ on $\R$, which we will denote
in the following by $\partial_t$. Let us remark that sometimes in literature the one-form $\omega$ is expressed using the metrically equivalent vector field $\delta$, that is,
$\omega(v)=g_0(v,\delta)$ for every $v\in TS$. 

We must observe that, in a Lorentzian manifold, lightlike geodesics and their conjugate points 
are preserved 
by conformal changes up to parametrization (see for example 
\cite[Theorem 2.36]{MinSan08}). This implies that studying lightlike geodesics of
$(S\times \R,g)$ is equivalent to studying lightlike geodesics of $(S\times\R,\frac{1}{\varphi}g)$.
As a consequence we can assume that the spacetime is a {\em normalized standard stationary spacetime}, that is, a standard stationary spacetime with a unit Killing vector field  and
\begin{equation}\label{e1} g((v,\tau),(v,\tau))=g_0(v,v)+2 \omega(v)\tau-\tau^2,
\end{equation}
in $(x,t)\in S\times\R$ for any $(v,\tau)\in T_xS\times \R$. In this case, $\partial_t$ rather than a conformal vector field is a unit Killing
vector field. 

The advantage of  formulating the Fermat's principle in  (conformally) standard stationary spacetimes 
is that it is possible to define a global time function given by the second coordinate in $S\times \R$ and
it also makes sense to speak about the spatial position, that is, the first coordinate. Now fix two
spatial positions $x_0$ and $x_1$ in $S$. Then Fermat's principle says that lightlike geodesics are critical points of the global time function between all the possible trajectories for light rays from 
$(x_0,t_0)$ to $(x_1,t_1)$, with $t_0,t_1\in\R$ (here we can fix $t_0$ but not $t_1$). According to General Relativity, as photons are massless,  the trajectories of light rays must be described by lightlike curves. Therefore, the space of curves for the Fermat's principle must be composed of smooth future-pointing lightlike curves. Let us observe that as the time-orientation is assumed to be given by the Killing
vector field $\partial_t$, a future-pointing causal curve is a curve $\gamma=(x,t):[a,b]\subseteq\R\rightarrow S\times \R$ satisfying that $g(\dot\gamma,\dot\gamma)\leq 0$ and $\dot t>0$. If $\gamma=(x,t):[0,1]\rightarrow S\times \R$ is a smooth lightlike curve from $(x_0,t_0)$ to $(x_1,t_1)$, we need to compute $t_1$, which is the value of the global time in
$\gamma(1)$. As $\gamma$ is lightlike, we have that
\[g_0(\dot x,\dot x)+2\omega(\dot x) \dot t- \dot t^2=0,\]
and hence, as $\gamma$ is assumed to be future-pointing (that is, $\dot t>0$),
\[\dot t=\sqrt{g_0(\dot x,\dot x)+\omega(\dot x)^2}+\omega(\dot x).\]
Integrating last equation we get 
\begin{equation}
\label{tempo}
t(s)=t_0+\int_0^s\left(\sqrt{g_0(\dot x,\dot x)+\omega(\dot x)^2}+\omega(\dot x)\right) dv.
\end{equation}
As a consequence, lightlike geodesics must be critical points of the functional
\[T(\gamma)=t_1=t_0+\int_0^1\left(\sqrt{g_0(\dot x,\dot x)+\omega(\dot x)^2}+\omega(\dot x)\right) dv.\]
This functional is, up to a constant, the length functional of the Finsler metric in $S$ given by
\begin{equation}\label{fermatmetric}
F(v)=\sqrt{g_0(v,v)+\omega(v)^2}+\omega(v),
\end{equation}
for every $v\in TS$. This metric is of Randers type, that is, the addition of the square of a Riemannian metric and a one-form of norm less than one in every point. We will call this metric the {\em Fermat metric} associated to the splitting \eqref{e1} (or in general to the splitting \eqref{e1conf}). 
\begin{rem}
With a similar reasoning, we get that past-pointing lightlike geodesics are controlled by the reverse metric of \eqref{fermatmetric}, that is,
\[\tilde{F}(v)=F(-v)\]
for every $v\in TS$. It is easy to see that 
\begin{enumerate}
\item[(i)] $d_{\tilde{F}}(p,q)=d_{F}(q,p)$ for every $p,q\in S$,
\item[(ii)] $\gamma:[0,1]\subseteq\R\rightarrow S$ is a geodesic from $p$ to $q$ of
$(S,F)$ if and only if the reverse curve $\tilde{\gamma}:[0,1]\rightarrow S$, $t\to \tilde{\gamma}(t)=\gamma(1-t)$ is a geodesic from $q$ to $p$ of $(S,\tilde{F})$.
\end{enumerate}
Then all the properties of past-pointing lightlike geodesics can be also written in terms of the Fermat metric \eqref{fermatmetric}.
\end{rem}
\begin{rem} Let us consider the class of standard  static spacetimes $(S\times\R,g_{st})$, with 
\[g_{st}((v,\tau),(v,\tau))=g_0(v,v)-\beta(x) \tau^2,\]
in $(x,t)\in S\times\R$, where $(v,\tau)\in T_xS\times\R$, $g_0$ is a Riemannian metric on $S$ and $\beta$ a positive smooth function on $S$. In particular they are standard stationary and the Fermat metric associated to them is Riemannian. Indeed it is conformal to the metric induced by $g_{st}$ in $S$, that is, $\frac{1}{\beta}g_0$. This fact was already pointed out in \cite[pg. 343]{LeCi27}. Up to the name of Fermat metric, other authors have used another name for the same concept, for example, in \cite{DoKe78,GiPe78}, it is used {\em optical metric} and in \cite{ACL88}, {\em optical reference geometry}.
\end{rem}
\begin{rem}
In the stationary case, it must be clarified that our terminology is different from that of \cite{Per90ii}, where the name of {\em Fermat metric} is used for the Riemannian metric in $S$ given by
\begin{equation}\label{perfermatmetric}
h(v,v)=g_0(v,v)+\omega(v)^2,
\end{equation}
for $v\in TS$.
He also introduces the name of {\em Fermat one-form} for $\omega$. Observe that then
our Fermat metric is the addition of the Fermat one-form and the square root of Perlick's Fermat metric. But our Fermat metric contains all the information and in fact it allows one to 
recover the Fermat one-form and 
Perlick's Fermat metric as
\begin{align*}
h(v,v)&=\frac{1}{4}\left( F(v)+F(-v)\right)^2,& \omega(v)&=\frac{1}{2}\left(F(v)-F(-v)\right),
\end{align*}
for any $v\in TS$, where $F$ is given in \eqref{fermatmetric}.
\end{rem}
The above computations show that in (conformally) standard stationary spacetimes, Fermat's principle relates future-pointing lightlike geodesics of $(S\times \R,g)$  as in \eqref{e1} with geodesics of the Finsler manifold $(S,F)$ with $F$ given in \eqref{fermatmetric} up to reparametrizations. Let us state
the relation including parametrizations.
\begin{thm}[Fermat's principle]\label{fermatprinciple}
Let $(S\times \R,g)$ be a standard stationary spacetime as in \eqref{e1}.  A curve $\gamma=(x,t):[a,b]\subseteq\R\rightarrow S\times \R$ is a lightlike geodesic of $(S\times \R,g)$ if and only if $x$ is a geodesic for the Fermat metric $F$ in \eqref{fermatmetric} parametrized to have constant $h$-Riemannian speed ($h$ as in \eqref{perfermatmetric}) and 
\[t(s)=t(a)+\int_a^s F(\dot x)\df \nu,\]
for every $s\in [a,b]$.
\end{thm}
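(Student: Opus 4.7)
The plan is to handle both implications by exploiting two structural facts: the future-pointing null condition rigidly determines $\dot t$ as $F(\dot x)$, and the Killing character of $\partial_t$ supplies a conservation law that controls the $h$-Riemannian speed of the spatial projection.

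For the ``only if'' direction, suppose $\gamma=(x,t)$ is a future-pointing lightlike geodesic. The null equation $g_0(\dot x,\dot x)+2\omega(\dot x)\dot t-\dot t^2=0$ together with $\dot t>0$ forces $\dot t=F(\dot x)$, which upon integration gives the stated formula for $t(s)$. Next, since $\partial_t$ is Killing, the quantity
\[
g(\dot\gamma,\partial_t)=\omega(\dot x)-\dot t
\]
is constant along $\gamma$; substituting $\dot t=\omega(\dot x)+\sqrt{h(\dot x,\dot x)}$ shows that $\sqrt{h(\dot x,\dot x)}$ equals a positive constant $c$, so $x$ is automatically parametrized with constant $h$-speed. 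To see that $x$ is an $F$-pregeodesic, I would apply Fermat's principle as developed just before the theorem: every future-pointing lightlike curve from $(x_0,t_0)$ to the line $\{x_1\}\times\R$ has arrival time $t_0+\int F(\dot x)\,d\nu$, and variations of $\gamma$ through future-pointing lightlike curves with $x_0,x_1,t_0$ fixed are in bijection with variations of $x$ in $S$ with endpoints fixed (since $t$ is then determined by \eqref{tempo}). Lightlike geodesics are exactly the critical points of the arrival time in this class, hence $x$ is a critical point of the Fermat length, i.e.\ an $F$-pregeodesic.

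For the converse, assume $x$ is an $F$-pregeodesic with constant $h$-speed $c$ and define $t$ by the stated integral formula. Then $\dot t=F(\dot x)>0$ immediately yields that $\gamma=(x,t)$ is future-pointing lightlike, and the computation $\omega(\dot x)-\dot t=-\sqrt{h(\dot x,\dot x)}=-c$ shows that the Euler--Lagrange equation for the $t$-coordinate of $L=\tfrac12 g(\dot\gamma,\dot\gamma)$ holds. The spatial Euler--Lagrange equations follow by the same variational identification in reverse: the arrival-time functional factors through the Fermat length, so a critical point of the latter lifts to a critical point of the former, and hence (being also future-pointing lightlike with the right parametrization) to an affinely parametrized geodesic of $(S\times\R,g)$.

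The delicate step is the variational passage itself, namely justifying that criticality in the restricted class of future-pointing lightlike curves is genuinely equivalent to being a full geodesic of $(S\times\R,g)$; this is the content of Perlick's rigorous Fermat principle cited in the introduction, and its invocation is what makes the proof go through cleanly. As a backup, one can instead carry out the direct calculation: write the Euler--Lagrange equations of $L=\tfrac12 g(\dot\gamma,\dot\gamma)$ in $(x,t)$-coordinates, use the Killing conservation $\omega(\dot x)-\dot t=-c$ to eliminate $\dot t$, and compare the resulting equation on $S$ with the Randers geodesic equation for $F$ under a constant-$h$-speed parametrization; the fundamental tensor computation recalled in Section~\ref{FinslerMetrics} together with the identity $F(\dot x)=c+\omega(\dot x)$ makes this a routine, if lengthy, verification.
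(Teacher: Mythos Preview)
Your proposal is correct, and in fact your ``backup'' route---writing out the Euler--Lagrange equations of the energy functional of $g$ in $(x,t)$-coordinates via the Levi-Civita connection of $g_0$, and comparing them to the Euler--Lagrange equations of the $F$-length functional under a constant $h$-speed parametrization---is exactly the argument the paper indicates (with a pointer to \cite[Theorem~4.1]{CJM11}). So on that level the two agree.

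Your \emph{primary} route is genuinely different and more conceptual. Rather than computing both systems of Euler--Lagrange equations and matching terms, you (i) read the constant $h$-speed directly off the Killing conservation $g(\dot\gamma,\partial_t)=\omega(\dot x)-\dot t=-\sqrt{h(\dot x,\dot x)}$, and (ii) delegate the remaining pregeodesic property to the abstract Fermat principle (the arrival-time criticality characterization of lightlike pregeodesics), using the bijection between future-pointing lightlike curves with fixed $(x_0,t_0,x_1)$ and spatial curves in $S$. What this buys you is a clean separation of the two phenomena---parametrization versus criticality---without any coordinate computation; the price is that you import Perlick's result as a black box. The paper's computational route is self-contained but heavier.

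One small point worth tightening in your converse: you correctly observe that an $F$-pregeodesic lifts, via the arrival-time identification, to a lightlike \emph{pregeodesic} of $(S\times\R,g)$. To promote this to an affinely parametrized geodesic you should make explicit that constant $h$-speed singles out the affine parameter: since any lightlike geodesic has $\sqrt{h(\dot x,\dot x)}=-g(\dot\gamma,\partial_t)$ constant, and any non-affine reparametrization $s\mapsto\tau(s)$ rescales $\sqrt{h(\dot x,\dot x)}$ by $|\tau'|$, the constant $h$-speed hypothesis forces the reparametrization relating your curve to the geodesic to be affine. You allude to this, but the sentence ``being also future-pointing lightlike with the right parametrization'' would benefit from spelling it out.
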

\begin{proof}
The equivalence can be easily obtained computing the critical points of the length functional
for $F$ with $h$-constant Riemannian speed using the Levi-Civita connection $\nabla$ of $g_0$ and then the ligthlike critical points of the energy functional of $g$ using again $\nabla$ (see for example \cite[Theorem 4.1]{CJM11}).
\end{proof}

\begin{rem}
Let us point out that V. Perlick \cite{Per90ii} considers a more general case than conformally standard stationary
spacetimes. Basically, he considers a conformally stationary spacetime $(M,g)$ 
where the  flow lines of the conformal vector field $K$ of $(M,g)$ have a structure of Hausdorff manifold $\hat{M}$ and the natural projection $\pi:M\rightarrow \hat{M}$ is a principal fiber bundle with structure group $\R$, with the action given by the flow of $K$. Observe that as the fiber is $\R$, there always exists a section of the bundle (see for example \cite[page 58]{KN}). But the existence 
of a spacelike section is not guaranteed. In fact, assuming that $K$ is complete, this happens if and only if the spacetime is distinguishing (see \cite{JaSan08}). Given a section $S$ of the fiber bundle, we can express the metric of $(M,g)$ as in \eqref{e1}, but with $g_0$ not necessarily spacelike. In this case, the global time given by the second coordinate is not necessarily a time function, that is, it does not have to be strictly increasing in causal curves. 
As a consequence, the Fermat metric obtained in \eqref{fermatmetric} can be non-positive
along some directions of the tangent space. In fact, it is not difficult to see that the Fermat metric \eqref{fermatmetric} is a Finsler metric
(with the definition given in Section \ref{FinslerMetrics}) if and only if the section $S$ is spacelike.
\end{rem} 
It can be helpful to restate the Fermat's principle as follows (see \cite[Proposition 4.1]{CJS11}).
\begin{prop}\label{ppp}
Let $z_0=(x_0,t_0)$, $L_{x_1}=\{(x_1,t): t\in \R\}$ be,
respectively, a point and a vertical line in a standard stationary
spacetime. Then $z_0$ can be joined with $L_{x_1}$ by means of a
future-pointing  lightlike pregeodesic
$t\mapsto\gamma(t)=(x_\gamma(t),t)$  starting at $z_0$ if and
only if $x_\gamma$ is a unit  speed geodesic of the Fermat metric
$F$  which joins $x_0$ with $x_1$. 
In this case,
\[t_1-t_0=\ell_F(x_\gamma|_{[t_0,t_1]}).\]
\end{prop}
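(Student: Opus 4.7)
My plan is to parametrize $\gamma$ explicitly by its time coordinate and reduce the proposition to Theorem~\ref{fermatprinciple}. The key kinematic observation is that, once we fix the parametrization $\gamma(t)=(x_\gamma(t),t)$ so that $\dot t\equiv 1$, the lightlike condition $g_0(\dot x_\gamma,\dot x_\gamma)+2\omega(\dot x_\gamma)-1=0$ rearranges, using $\|\omega\|_{g_0}<1$ (equivalently, positivity of the Randers norm $F$, so that $1-\omega(\dot x_\gamma)>0$), to the pointwise identity $F(\dot x_\gamma)\equiv 1$. Thus the two normalizations appearing in the statement---``future-pointing lightlike with $\dot t=1$'' on one side and ``unit $F$-speed'' on the other---are automatically compatible, and the content of the proposition is the pregeodesic/geodesic correspondence modulo this normalization.

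For the direct implication, suppose $\gamma(t)=(x_\gamma(t),t)$ is a future-pointing lightlike pregeodesic. By definition there is an orientation-preserving reparametrization $t=\sigma(s)$ such that $\tilde\gamma(s):=\gamma(\sigma(s))$ is a genuine lightlike geodesic. Theorem~\ref{fermatprinciple} then asserts that the spatial projection $\tilde x(s)=x_\gamma(\sigma(s))$ is an $F$-geodesic (parametrized with constant $h$-Riemannian speed). Since $x_\gamma$ shares its trace with $\tilde x$ and carries unit $F$-speed by the above observation, it must be the unit-speed reparametrization of that same $F$-geodesic---hence itself a unit-speed $F$-geodesic joining $x_0$ with $x_1$.

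For the converse, given a unit-speed $F$-geodesic $x_\gamma:[t_0,t_1]\to S$ from $x_0$ to $x_1$, set $\gamma(t)=(x_\gamma(t),t)$. The identity $F(\dot x_\gamma)=1$ shows directly that $\gamma$ is future-pointing lightlike. To secure the pregeodesic property, reparametrize $x_\gamma$ by some $\tau:I\to[t_0,t_1]$ with $\tau(0)=t_0$ so that $\tilde x(s):=x_\gamma(\tau(s))$ acquires constant $h$-Riemannian speed, and define $\tilde t(s):=t_0+\int_0^s F(\dot{\tilde x})\,\df\nu$. The short computation
\[F(\dot{\tilde x}(\nu))=\dot\tau(\nu)\,F(\dot x_\gamma(\tau(\nu)))=\dot\tau(\nu),\]
(using positive homogeneity of $F$ and unit $F$-speed of $x_\gamma$) yields $\tilde t(s)=\tau(s)$, so $\tilde\gamma(s):=(\tilde x(s),\tilde t(s))=\gamma(\tau(s))$ is precisely a reparametrization of $\gamma$. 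Theorem~\ref{fermatprinciple} certifies $\tilde\gamma$ as a lightlike geodesic, whence $\gamma$ is a lightlike pregeodesic joining $z_0$ with $(x_1,t_1)\in L_{x_1}$. The length identity $t_1-t_0=\int_{t_0}^{t_1}F(\dot x_\gamma)\,\df t=\ell_F(x_\gamma|_{[t_0,t_1]})$ is then immediate.

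The step that requires the most care---the only real obstacle---is matching the reparametrization delivered by Theorem~\ref{fermatprinciple} to a reparametrization of the canonical lift $\gamma(t)=(x_\gamma(t),t)$ itself, rather than of some unrelated lightlike geodesic with the same spatial trace. The identification $\tilde t(s)=\tau(s)$ is precisely what performs this bookkeeping, and it relies on the combination of the homogeneity of $F$ with the unit-$F$-speed normalization of $x_\gamma$ that is forced by the lightlike equation with $\dot t=1$.
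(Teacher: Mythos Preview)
Your reduction to Theorem~\ref{fermatprinciple} is exactly the intended route: the paper presents Proposition~\ref{ppp} as a restatement of Fermat's principle and simply cites \cite[Proposition 4.1]{CJS11}, so your derivation fills in precisely what the paper leaves implicit, and the reparametrization bookkeeping via $\tilde t(s)=\tau(s)$ is the right mechanism.

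One small correction: the hypothesis $\|\omega\|_{g_0}<1$ that you invoke is \emph{not} assumed anywhere in the paper, and it is not equivalent to positivity of the Fermat metric~\eqref{fermatmetric} (that $F$ is automatically positive, since $h(v,v)=g_0(v,v)+\omega(v)^2\geq\omega(v)^2$ with strict inequality for $v\neq 0$). What you actually need, namely $1-\omega(\dot x_\gamma)>0$, follows directly from the lightlike equation itself: with $\dot t=1$ it reads $g_0(\dot x_\gamma,\dot x_\gamma)=1-2\omega(\dot x_\gamma)$, and since $g_0$ is Riemannian and $\dot x_\gamma\neq 0$ (else $0=1$), we get $\omega(\dot x_\gamma)<\tfrac12<1$. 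With that one-line fix your argument is complete.
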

Let us observe that Fermat metric depends on the spacelike section you choose to obtain the standard splitting (which in some references as \cite{GHWW09,Per90} is called the gauge choice). Last proposition can be used to obtain the relation between two Fermat metrics associated
to different splittings of
the same stationary spacetime (with a fixed timelike Killing vector field $K$).
If $(S\times \R,g)$ is one of the splittings (with $g$ as in \eqref{e1}), the other one is determined by a section given by a smooth function $f:S\rightarrow \R$ as $S_f=\{(x,f(x))\in S\times \R: x\in S\}$. Then you can define the map $\psi_f:S\times \R\rightarrow S\times \R$ given as $\psi(x,t)=(x,t+f(x))$ for every $(x,t)\in S\times \R$. Therefore the other splitting is expressed as $(S\times \R,g_f)$, where $g_f=\psi^*_f(g)$ (here $*$ denotes the pullback operation).
\begin{prop}\label{F+df}
With the above notation, the Fermat metric associated to the splitting $(S\times \R,g_f)$ is $F_f=F-df$, where $F$ is the Fermat metric associated to $(S\times \R,g)$ and $df$ is the differential of the smooth function $f$.
\end{prop}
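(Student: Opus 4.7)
The plan is to compute $g_f=\psi_f^*g$ explicitly in coordinates on $S\times\R$, read off the Riemannian datum and one-form of the new splitting, and then apply formula \eqref{fermatmetric} to that new datum. The diffeomorphism $\psi_f(x,t)=(x,t+f(x))$ has differential $(d\psi_f)_{(x,t)}(v,\tau)=(v,\tau+df_x(v))$, so substituting into \eqref{e1} gives
\[
g_f((v,\tau),(v,\tau)) = g_0(v,v) + 2\omega(v)\bigl(\tau+df(v)\bigr) - \bigl(\tau+df(v)\bigr)^2.
\]
Expanding and collecting powers of $\tau$, this is again of the form \eqref{e1}, with Riemannian part and one-form
\[
g_0^f(v,v) = g_0(v,v) + 2\omega(v)df(v) - df(v)^2, \qquad \omega_f(v) = \omega(v) - df(v).
\]
(The coefficient of $\tau^2$ is still $-1$, as must be the case because $\psi_f$ fixes $\partial_t$, so the Killing field is preserved as a unit timelike field.)

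Now apply \eqref{fermatmetric} to the pair $(g_0^f,\omega_f)$. The key observation is that the square-root term is actually unchanged:
\[
g_0^f(v,v) + \omega_f(v)^2 = g_0(v,v) + 2\omega(v)df(v) - df(v)^2 + \omega(v)^2 - 2\omega(v)df(v) + df(v)^2 = g_0(v,v)+\omega(v)^2,
\]
so $F_f(v)=\sqrt{g_0(v,v)+\omega(v)^2}+\omega(v)-df(v) = F(v)-df(v)$, as claimed.

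There is essentially no obstacle here; the only thing worth verifying is that the new splitting is indeed of the normalized standard stationary form (so that the Fermat construction applies), which follows because $\psi_f$ is a diffeomorphism commuting with translations in $t$ and preserving $\partial_t$. As an alternative, one could derive the formula from Proposition \ref{ppp}: a future-pointing lightlike pregeodesic in the old splitting from $(x_0,t_0)$ to $L_{x_1}$ corresponds via $\psi_f^{-1}$ to one in the new splitting, and comparing arrival times along a common spatial curve $x$ gives $\ell_{F_f}(x) = \ell_F(x) - (f(x_1)-f(x_0))$; since this holds on every curve, the Finsler integrands must agree, i.e.\ $F_f=F-df$. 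But the direct pullback computation is shorter.
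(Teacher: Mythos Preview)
Your proof is correct, and the direct pullback computation is clean. It is, however, a genuinely different route from the paper's own argument. The paper proves the identity via Proposition~\ref{ppp}: it observes that for any Finsler metric $\hat F$ one has $\hat F(\dot\gamma(0))=\left.\frac{d}{ds}\right|_{s=0}\ell_{\hat F}(\gamma|_{[0,s]})$, then uses the arrival-time interpretation of the Fermat length to conclude $\ell_{F_f}(\gamma)=\ell_F(\gamma)+f(\gamma(-\varepsilon))-f(\gamma(\varepsilon))$, and finally differentiates. In other words, the paper's proof is precisely the ``alternative'' you sketch in your last paragraph.

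What each approach buys: your pullback computation is self-contained and does not rely on Proposition~\ref{ppp}; it also makes transparent the auxiliary fact that the Perlick--Fermat Riemannian metric $h=g_0+\omega\otimes\omega$ is unchanged under the gauge shift (since $g_0^f+\omega_f^2=g_0+\omega^2$), which the paper uses later. The paper's argument, on the other hand, is coordinate-free and emphasizes the conceptual reason the formula holds---the Fermat length is an arrival time, and the two splittings differ by a shift of the time origin by $f$---so the result is immediate once Proposition~\ref{ppp} is in hand, without any algebraic cancellation.
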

\begin{proof}
Observe that given a curve $\gamma:[-\varepsilon,\varepsilon]\rightarrow S$, with $\varepsilon>0$,
\[\hat{F}(\dot \gamma(0))=\left.\frac{d}{ds}\right|_{s=0}\ell_{\hat{F}}(\gamma|_{[0,s]}),\]
for any Finsler metric $\hat{F}$.
 Moreover,  as a consequence of Proposition \ref{ppp}, 
\begin{equation}\label{lengthF+df}
\ell_{F_f}(\gamma)=\ell_F(\gamma)+f(\gamma(-\varepsilon))-f(\gamma(\varepsilon))=\int_{-\varepsilon}^\varepsilon( F(\dot\gamma)-df(\dot\gamma))ds.\end{equation}
Given $v\in TS$, consider $\gamma:[-\varepsilon,\varepsilon]\rightarrow S$ such that
$\dot\gamma(0)=v$. Then
\[ F_f(v)=\left.\frac{d}{ds}\right|_{s=0}\ell_{F_f}(\gamma|_{[0,s]}))=F(v)-df(v)\]
for any $v\in TS$.
\end{proof}
\begin{prop}
Given an arbitrary function $f:S\rightarrow \R$, the section $S_f$ of $S\times \R$ is spacelike if and 
only if $F(v)>df(v)$ for every $v\in TS$.
\end{prop}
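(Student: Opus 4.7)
The plan is to write the spacelike condition on $S_f$ in coordinates and match it against the formula \eqref{fermatmetric} for $F$. Since a tangent vector to $S_f$ at $(x,f(x))$ has the form $(v,df_x(v))$ for $v\in T_xS$, the section is spacelike precisely when
\[g\bigl((v,df_x(v)),(v,df_x(v))\bigr)=g_0(v,v)+2\omega(v)\,df(v)-df(v)^2>0\]
for every $x\in S$ and every nonzero $v\in T_xS$, where I have used \eqref{e1}.

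The key step is to read the left-hand side as a downward-opening quadratic in the scalar $df(v)$; positivity is then equivalent to $df(v)$ lying strictly between the two roots of $\tau^2-2\omega(v)\tau-g_0(v,v)=0$. A direct computation gives those roots as $\omega(v)\pm\sqrt{g_0(v,v)+\omega(v)^2}$, which by \eqref{fermatmetric} are exactly $F(v)$ and $-F(-v)$. Hence spacelikeness is equivalent to the two-sided bound $-F(-v)<df(v)<F(v)$ for every nonzero $v\in TS$.

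Finally, I would observe that this pair of bounds collapses to the single inequality $F(v)>df(v)$: applying the latter to $-v$ and using linearity of $df$ yields $F(-v)>-df(v)$, i.e.\ $df(v)>-F(-v)$, so one inequality automatically supplies the other. The statement of the proposition is thus naturally interpreted for $v\neq 0$ (at $v=0$ both sides vanish). The calculation is entirely routine; the only substantive content, and essentially the reason the Fermat metric is the right object to use here, is the identification of the roots of the quadratic defining the lightlike condition with the two Fermat values $F(\pm v)$.
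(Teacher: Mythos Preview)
Your argument is correct. The computation is exactly the right one: tangent vectors to $S_f$ are of the form $(v,df(v))$, and positivity of $g$ on them amounts to $df(v)$ lying strictly between the two roots of the lightlike quadratic, which are $F(v)$ and $-F(-v)$; the symmetry under $v\mapsto -v$ then reduces the two-sided bound to the single inequality $F(v)>df(v)$ for $v\neq 0$.

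The paper itself does not give a proof here---it only refers to \cite[Proposition~5.8]{CJS11}---so you have actually supplied more than the text does. It is perhaps worth noting that the surrounding material suggests a slightly different packaging of the same computation: by Proposition~\ref{F+df} the Fermat metric of the new splitting is $F_f=F-df$, and the preceding remark observes that a Fermat metric is a genuine Finsler metric (in particular, positive on $TS\setminus 0$) precisely when the section is spacelike. Combining these two facts yields the statement immediately. This is not really a different proof---unwinding the remark leads to exactly your quadratic analysis---but it explains why the result fits naturally at this point in the exposition. Your observation about the $v=0$ case is also apt; the strict inequality can only be meant on $TS\setminus 0$.
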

\begin{proof}
See also \cite[Proposition 5.8]{CJS11}.
\end{proof}
Now we can establish the so-called Stationary to Randers correspondence (see \cite{CJS11}). Let us call
${\rm Stat}(S\times \R)$ the space of standard stationary spacetimes with normalized Killing vector field $\partial_t$ and ${\rm Rand}(S)$ the space of Randers metrics on $S$. Then one has the bijective map
\begin{equation}\label{eee}
{\rm Stat} (S\times \R)  \rightarrow  {\rm Rand}(S), \quad
\quad g \mapsto F_{g},  \end{equation} 
where  $F_g$ is determined as in \eqref{fermatmetric}
by the same stationary data pair $(g_0,\omega)$ which determines
$g$ in \eqref{e1}. Moreover, we can define in both sets equivalence relations as
$$\begin{array}{clll}
R\sim R'& \Longleftrightarrow & R-R'= df & \hbox{for some smooth function $f$ on $S$},\\
g\sim g ' & \Longleftrightarrow & g'=\psi_f^*g & \hbox{for some
change of the initial section $\psi_f$},
\end{array}$$
and consider the corresponding quotient sets  {\rm Rand}$(S)/\sim$, {\rm Stat}$(S\times \R)/\sim$. Proposition \ref{F+df} says that the bijection (\ref{eee})
induces a well-defined bijective map between the quotients
$$({\rm Stat}(S\times \R)/\sim ) \rightarrow {\rm
(Rand}(S)/\sim ).
$$
This relation constitutes a very important issue for Randers metrics, because the global invariants in
the spacetime must be translated in invariants for the entire class of Randers metrics that differ in 
the differential of a function.
\section{Causality and Fermat metrics}
As, by Proposition \ref{ppp}, geodesics of Fermat metrics contain all the information of lightlike geodesics up to reparametrization, it turns out that Fermat metrics can be used to describe the chronological future and past of a given point. As a consequence, we can charaterize the causal conditions of a standard stationary spacetime in terms of the Fermat metric. This relation was established in \cite{CJS11} with some previous partial results in \cite{CJM11}.  Recall that 
we say that two
events $p$ and $q$ in a spacetime are
chronologically related, and write $p\ll q$ (resp. strictly
causally related $p< q$)  if there exists a future-pointing
timelike (resp.
 causal) curve $\gamma$ from $p$ to $q$; $p$ is causally related to $q$ if either $p<q$ or $p=q$, denoted $p\leq q$.
Then the {\it chronological future}  (resp. {\it causal future}) of
$p\in M$ is defined as $I^+(p)=\{q\in M : p\ll q\}$ (resp.
$J^+(p)=\{q\in M : p\leq q\}$). Analogous notions appear
substituting the word ``future" by ``past" and, denoting
$I^-(p), J^-(p)$.

\begin{prop}\label{bolas}
Let $(S\times \R,  g)$ be a standard stationary spacetime as in
\eqref{e1} and $(x_0,t_0)\in S\times\R$. Then
 $$I^+(x_0,t_0)= \cup_{s> 0}\{t_0+s\}\times B_F^+(x_0,s), $$
 $$I^-(x_0,t_0)= \cup_{s< 0}\{t_0-s\}\times B_F^-(x_0,s).$$
\end{prop}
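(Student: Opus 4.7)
The plan is to prove the identity for $I^+$; the one for $I^-$ is symmetric, and the appearance of backward balls $B_F^-$ reflects the fact that a past-pointing trip from $(x_0,t_0)$ corresponds to a future-pointing trip in the time-reversed spacetime, whose Fermat metric is $\tilde F(v)=F(-v)$ and satisfies $d_{\tilde F}(p,q)=d_F(q,p)$ by the remark after \eqref{fermatmetric}.

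For the inclusion $I^+(x_0,t_0)\subseteq\bigcup_{s>0}\{t_0+s\}\times B_F^+(x_0,s)$, I take a future-pointing timelike curve $\gamma=(x,t):[0,1]\to S\times\R$ from $(x_0,t_0)$ to $(x_1,t_1)$. Since $g(\dot\gamma,\dot\gamma)<0$ and $\dot t>0$, solving the quadratic $g_0(\dot x,\dot x)+2\omega(\dot x)\dot t-\dot t^2<0$ exactly as in the derivation of \eqref{tempo}, but with strict inequality, gives
\[
\dot t \;>\; \sqrt{g_0(\dot x,\dot x)+\omega(\dot x)^2}+\omega(\dot x)=F(\dot x).
\]
Integrating over $[0,1]$ yields $t_1-t_0>\ell_F(x)\geq d_F(x_0,x_1)$, so $x_1\in B_F^+(x_0,t_1-t_0)$ and $(x_1,t_1)$ sits in the right-hand side with $s=t_1-t_0$.

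For the reverse inclusion, suppose $x_1\in B_F^+(x_0,s)$, so some piecewise smooth curve $\alpha:[0,1]\to S$ from $x_0$ to $x_1$ satisfies $\ell_F(\alpha)<s$. After reparametrizing $\alpha$ to have constant $F$-speed equal to $\ell_F(\alpha)$, set $\gamma(u)=(\alpha(u),t_0+su)$. Then $\dot t\equiv s>F(\dot\alpha)$, and substituting into \eqref{e1} gives $g(\dot\gamma,\dot\gamma)<0$; hence $\gamma$ is a future-pointing timelike curve from $(x_0,t_0)$ to $(x_1,t_0+s)$, and therefore $(x_1,t_0+s)\in I^+(x_0,t_0)$.

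The only technical nuisance I anticipate is regularity: the constant-speed reparametrization of a piecewise smooth Finslerian curve is in general only piecewise smooth, so the construction above may produce corners at the breakpoints of $\alpha$. This is not a genuine obstacle, because the strict inequality $\dot t>F(\dot x)$ is an open condition stable under small perturbations, and a standard corner-rounding argument yields a smooth timelike curve. Conceptually, the essential point is that the strictness $\dot t>F(\dot x)$ in the timelike case, as opposed to the equality in the lightlike computation that leads to Fermat's principle, is exactly what produces the \emph{open} balls $B_F^\pm$ appearing in the statement.
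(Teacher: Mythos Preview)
Your argument is correct and complete: the key algebraic observation that a future-pointing timelike vector $(v,\tau)$ satisfies the strict inequality $\tau>F(v)$, together with the elementary converse construction, is exactly what is needed, and your handling of the regularity issue (piecewise smooth curves suffice for $I^+$, or one rounds corners) is adequate. The paper itself gives no proof here but only cites \cite[Proposition~4.2]{CJS11}; your direct computation is the natural argument and is presumably what appears there.
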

\begin{proof}
See \cite[Proposition 4.2]{CJS11}.
\end{proof}
Using the expression of the chronological future and past in terms of the forward and backward 
balls of the Fermat metric, we can easily obtain
the characterization of the causality conditions in terms of the Fermat metric. For definitions 
and properties
of the different levels of causality we refer to \cite{MinSan08}.
\begin{thm}\label{causalstructure}
Let $(S\times \R,  g)$ be a standard stationary spacetime as in \eqref{e1}. Then
$(S\times \R,  g)$ is causally continuous. Furthermore,
\begin{enumerate}[(a)]
 \item[(a)]
 it is causally simple if and only if one of the following equivalent conditions holds:
\begin{enumerate}
\item[(i)] $J^+(p)$ is closed
for all $p$, 
\item[(ii)] $J^-(p)$ is closed for all $p$ and 
\item[(iii)] the
associated Finsler manifold $(S,F)$ is convex,
\end{enumerate}
\item[(b)] it is globally hyperbolic  if and only
if the subsets $B^+_F(x,r)\cap B^-_F(x,r)$ are relatively compact for
every $x\in S$ and $r>0$.
\end{enumerate}
Moreover,  a
slice $S\times \{t_0\}, t_0\in\R$, is a Cauchy hypersurface if and
only
if the Fermat metric $F$ on $S$
is forward and backward complete.
\end{thm}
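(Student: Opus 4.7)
The plan is to reduce every claim in the theorem to a property of the Fermat distance $d_F$ via Proposition \ref{bolas}. Throughout, the second coordinate $t:S\times\R\to\R$ is a time function because $\dot t>0$ on every future-pointing causal curve, so the spacetime is automatically stably causal. Using continuity of $d_F(x_0,\cdot)$ and $d_F(\cdot,x_0)$ on $S$, Proposition \ref{bolas} readily gives
\[\overline{I^+(x_0,t_0)}=\{(x,t):d_F(x_0,x)\le t-t_0\},\qquad \overline{I^-(x_0,t_0)}=\{(x,t):d_F(x,x_0)\le t_0-t\},\]
and these identifications will be the central computational device for every item.

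For causal continuity, it remains to verify future and past reflectivity, since distinguishability follows from the time function. But from the displayed formulas, both $(x_1,t_1)\in\overline{I^+(x_0,t_0)}$ and $(x_0,t_0)\in\overline{I^-(x_1,t_1)}$ amount to the single inequality $d_F(x_0,x_1)\le t_1-t_0$, giving reflectivity at once. For item (a), a causal curve from $(x_0,t_0)$ to $(x_1,t_1)$ satisfies $t_1-t_0\ge \ell_F(x)\ge d_F(x_0,x_1)$, so $J^+(x_0,t_0)\subseteq\overline{I^+(x_0,t_0)}$, with equality in the strict-inequality interior. A point on the boundary, where $d_F(x_0,x_1)=t_1-t_0$, belongs to $J^+(x_0,t_0)$ if and only if $x_0$ and $x_1$ are joined by an $F$-minimizing curve. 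Hence $J^+(p)$ is closed for every $p$ if and only if $(S,F)$ is convex; the same reasoning applied to the reverse Fermat metric $\tilde F$ handles $J^-(p)$ and gives the equivalence of the three conditions in (a).

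For (b), I would invoke the standard criterion that causality plus compactness of every diamond $J^+(p)\cap J^-(q)$ is equivalent to global hyperbolicity. Using the explicit description of $J^\pm$ from item (a), a diamond $J^+(x_0,t_0-r)\cap J^-(x_0,t_0+r)$ has its spatial projection trapped inside $\overline{B_F^+(x_0,2r)}\cap\overline{B_F^-(x_0,2r)}$ and its $t$-range inside $[t_0-r,t_0+r]$, so relative compactness of the symmetric ball intersections for every $x$ and $r$ is equivalent to compactness of all diamonds (one direction is the inclusion just stated, the other comes from choosing $p,q$ of this special form). Finally, for the Cauchy-slice statement, one direction is direct: if $F$ is forward and backward complete, an inextendible future-pointing causal curve confined to $t\le t_0$ would project to a curve of bounded $F$-length in $S$, which by completeness admits a limit point, contradicting inextendibility; hence the curve must cross $S\times\{t_0\}$. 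The main obstacle is the converse, where the hypothesis that $S\times\{t_0\}$ is Cauchy must be used to rule out non-convergent forward (resp.\ backward) $F$-Cauchy sequences. The delicate point is the irreversibility of $F$, which forces one to treat forward and backward completeness separately and to perform the lifting of the Cauchy sequence to an inextendible causal curve while controlling the $h$-arclength reparametrization dictated by Theorem \ref{fermatprinciple}.
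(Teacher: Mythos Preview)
The paper does not give a self-contained argument here; it simply refers to Theorems~4.3 and~4.4 and Proposition~2.2 of \cite{CJS11}. Your outline follows exactly the strategy of that reference---translate each causal condition into a statement about $d_F$ via Proposition~\ref{bolas}---and your treatment of causal continuity and of part~(a) is correct.

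Two points need attention, however.

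In part~(b), your inclusion shows only that every diamond $J^+(p)\cap J^-(q)$ is \emph{relatively} compact; you still owe closedness before concluding compactness. The clean fix is to observe that the ball hypothesis already forces $(S,F)$ to be convex: a minimizing sequence of curves from $x_0$ to $x_1$ is trapped, by the triangle inequality, in a single set $\overline{B_F^+(x_0,r)}\cap\overline{B_F^-(x_0,r)}$, and on its compact closure $F$ is uniformly comparable to any auxiliary Riemannian metric, so an Arzel\`a--Ascoli argument produces a minimizer. Then part~(a) yields causal simplicity, each $J^\pm(p)$ is closed, and the diamonds are genuinely compact. Without this intermediate step the implication ``ball condition $\Rightarrow$ globally hyperbolic'' is not justified; this is precisely why the paper singles out \cite[Proposition~2.2]{CJS11} for part~(b). (You should also note that the general diamond $J^+(x_0,t_0)\cap J^-(x_1,t_1)$, with $x_0\neq x_1$, is contained in a symmetrized ball about $x_0$ by one more use of the triangle inequality; your sketch only treats the case $x_0=x_1$.)

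For the Cauchy-hypersurface converse, your strategy of lifting a non-convergent forward Cauchy sequence to an inextendible causal curve that misses the slice is right, but the difficulty you anticipate is illusory. You do not need the lift to be a \emph{geodesic}, only a future-pointing lightlike \emph{curve}; so Theorem~\ref{fermatprinciple} and the $h$-arclength reparametrization play no role. Simply concatenate paths from $x_n$ to $x_{n+1}$ of $F$-length below $2^{-n}$ into a curve $x:[0,L)\to S$ with $L<\infty$ and no endpoint, and lift via $t(s)=t_0-L+\int_0^s F(\dot x)\,d\nu$ as in \eqref{tempo}. The resulting lightlike curve has $t<t_0$ throughout and is future-inextendible because $x$ has no limit; any past-inextendible extension then gives an inextendible causal curve avoiding the slice $S\times\{t_0\}$. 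Backward completeness is handled symmetrically via the reverse metric $\tilde F$.
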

\begin{proof}
See \cite[Theorems 4.3 and 4.4]{CJS11}. For part $(b)$ see also \cite[Proposition 2.2]{CJS11}.
\end{proof}
The static version of the last proposition can be found in \cite[Proposition 3.5]{San97} (see also
\cite[Theorem 3.66]{BeEr}). Furthermore, an extension of last theorem characterizing
the stationary regions that are causally simple in terms of convex regions for the Fermat metric has been achieved in \cite{CGS11}.
Theorem \ref{causalstructure} implies some consequences for Randers metrics. In particular we can establish
a generalization of the classical Hopf-Rinow theorem.
\begin{thm}\label{hopf-rinow}
Given a Randers manifold $(M,R)$, the following conditions are equivalent:
\begin{enumerate}[(i)]
\item the subsets $B^+_R(x,r)\cap B^-_R(x,r)$ are relatively compact for
every $x\in M$ and $r>0$,
\item the subsets that are forward and backward bounded are relatively compact,
\item there exists $f:M\rightarrow \R$ such that $R+df$ is a forward and backward complete
Randers metric.
\end{enumerate}
Moreover, these conditions imply the convexity of $(M,R)$.
\end{thm}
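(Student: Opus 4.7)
The plan is to translate the three conditions into statements about the normalized standard stationary spacetime associated to $(M,R)$ via the correspondence \eqref{eee} and then apply Theorem~\ref{causalstructure}. Concretely, I would let $(M\times\R,g)$ be the standard stationary spacetime whose Fermat metric is $R$. By Theorem~\ref{causalstructure}(b), condition (i) is exactly the assertion that $(M\times\R,g)$ is globally hyperbolic. I would then prove the chain (i)$\Rightarrow$(iii)$\Rightarrow$(ii)$\Rightarrow$(i), noting that (ii)$\Rightarrow$(i) is immediate because $B_R^+(x,r)\cap B_R^-(x,r)$ is itself forward and backward bounded.

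The heart of the argument is (i)$\Rightarrow$(iii). I would invoke the Geroch--Bernal--S\'anchez theorem to produce a smooth spacelike Cauchy hypersurface $\Sigma\subset M\times\R$. Since $\partial_t$ is timelike, each integral curve $\{x\}\times\R$ is an inextendible timelike curve and hence meets $\Sigma$ at exactly one point; the spacelike condition forces this intersection to be transverse, so $\Sigma=S_f=\{(x,f(x)):x\in M\}$ for some smooth $f:M\to\R$. Applying the change of section $\psi_f(x,t)=(x,t+f(x))$ re-splits the spacetime as $(M\times\R,g_f)$, in which the slice $M\times\{0\}$ is now Cauchy. Proposition~\ref{F+df} identifies the new Fermat metric as $R-df$, and the last sentence of Theorem~\ref{causalstructure} then yields that $R-df$ is forward and backward complete. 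Replacing $f$ by $-f$ gives (iii). The main obstacle I anticipate is rigorously justifying that $\Sigma$ is genuinely the graph of a smooth globally defined function; this is exactly where the transversality argument just sketched does the work.

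For (iii)$\Rightarrow$(ii), I would start from the telescoping identity
\[d_{R+df}(p,q)=d_R(p,q)+f(q)-f(p),\]
valid because integrating $df$ along a curve depends only on its endpoints. If $A\subseteq B_R^+(p,r)\cap B_R^-(p,r)$, adding this identity to its analogue with $p,q$ swapped yields $d_{R+df}(p,x)+d_{R+df}(x,p)<2r$ for every $x\in A$; each summand is nonnegative because $R+df$ is a Randers metric, so $A$ is forward and backward bounded for $R+df$, and the Finsler Hopf--Rinow theorem then delivers the relative compactness of $A$. Finally, the convexity assertion follows from (i): global hyperbolicity sits above causal simplicity in the causal ladder, and by Theorem~\ref{causalstructure}(a)(iii) causal simplicity is equivalent to the convexity of $(M,R)$.
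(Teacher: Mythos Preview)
Your argument is correct and follows the paper's strategy for (i)$\Rightarrow$(iii) essentially verbatim (Bernal--S\'anchez Cauchy hypersurface, graph representation via transversality of the spacelike $\Sigma$ to the timelike $\partial_t$, Proposition~\ref{F+df}, last clause of Theorem~\ref{causalstructure}).

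The differences lie in the return implication and in the convexity clause. For (iii)$\Rightarrow$(i) the paper stays inside the spacetime picture: the stationary spacetime built from $R+df$ is globally hyperbolic by Theorem~\ref{causalstructure} (since $R+df$ is complete), but by Proposition~\ref{F+df} this is the \emph{same} spacetime as the one built from $R$, merely re-split; hence (i) follows again from Theorem~\ref{causalstructure}(b). You instead give a purely metric argument via the identity $d_{R+df}(p,q)=d_R(p,q)+f(q)-f(p)$, the cancellation in $d_{R+df}(p,x)+d_{R+df}(x,p)$, and the classical Finsler Hopf--Rinow theorem. Your route is more elementary and avoids a second pass through the Lorentzian correspondence; the paper's route has the conceptual payoff of making clear that (i)--(iii) are all avatars of a single spacetime invariant (global hyperbolicity), independent of the splitting. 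For convexity the paper invokes the Avez--Seifert theorem on the globally hyperbolic $(M\times\R,g)$ (a maximizing causal geodesic from $(p,0)$ to $(q,d_R(p,q))\in\partial I^+(p,0)$ is lightlike and projects, via Proposition~\ref{ppp}, to a minimizing Fermat geodesic), whereas you descend the causal ladder to causal simplicity and quote Theorem~\ref{causalstructure}(a)(iii). Both are valid; yours is shorter given the results already stated in the paper.
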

\begin{proof}
The equivalence between the two first conditions is standard and it holds for any Finler metric. For $(i)\Rightarrow (iii)$, first observe that any Randers metric can be obtained as the Fermat metric of a standard stationary spacetime (see \cite[Proposition 3.1]{BiJa11}). Now let $(S\times \R,g)$ be the standard stationary spacetime having as a Fermat metric $R$. By Proposition \ref{causalstructure}, this spacetime is globally hyperbolic, but then using \cite{BerSan03}, we obtain that there exists a smooth spacelike Cauchy hypersurface $S_f$. Consider the splitting associated to the Cauchy hypersurface. By Proposition \ref{F+df}, the Fermat metric associated to the new splitting is of the form $R-df$ for a certain smooth function $f:S\rightarrow \R$. Moreover, by Proposition \ref{causalstructure}, $R-df$ must be forward and backward complete. For $(iii)\Rightarrow (i)$, observe that part $(b)$ of Theorem \ref{causalstructure} implies that the stationary spacetime associated to
$R+df$ is globally hyperbolic. By Proposition \ref{F+df}, the stationary spacetime associated to $R$ is the same as the one associated to $R+df$, but considering another splitting. Therefore it is globally hyperbolic and 
$(i)$ follows from part $(b)$ of Theorem \ref{causalstructure}. The convexity can be obtained from the Avez-Seifert Theorem applied to $(S\times \R,g)$ (see for example \cite[Theorem 6.1]{BeEr}).
\end{proof}
It turns out that the condition of forward or backward completeness can be substituted by one of the two first equivalent conditions in Theorem \ref{hopf-rinow} in some classical results of Finsler Geometry, as for example, the theorems of Bonnet-Myers and Synge or the sphere theorem in its 
non-reversible version by Rademacher (see \cite[Remark 5.3]{CJS11}). 

Theorem \ref{causalstructure} has been used in \cite{DPS11} to obtain some conditions that ensure
global hyperbolicity. Recall that $h$ is defined in \eqref{perfermatmetric}. Given any Riemannian metric
$g$ in $S$, we will denote by $d_g$ the distance in $S$ associated to $g$. We say that a positive function $f$ in $S$ grows at most linearly with respect to $d_g$ if given a point $x_0$, there exist positive constants $A,B$ such that $f(x)\leq A\cdot d_g(x_0,x)+B$ for every $x\in S$. This condition does not depend on $x_0$. We also will denote 
\[\|\omega\|_{g}=\sup_{v\in T_xS}\frac{|\omega(v)|}{\sqrt{g(v,v)}}\]
the $g$-norm of a one-form $\omega$ in $x\in S$ for any Riemannian metric $g$ on $S$.
\begin{thm}
Let $(S\times \R,g)$ be a conformally standard stationary spacetime with $g$ as in \eqref{e1conf}. Then the slices 
$S\times \{t\}$, $t\in\R$, are Cauchy hypersurfaces if one of the following conditions hold:
\begin{enumerate}[(i)]
\item the metric $\frac{1}{(1+\|\omega\|^2_{g_0})^2}h$ is complete,
\item the metric $g_0$ is complete and $\|\omega\|_{g_0}$ grows at most linearly in $d_{g_0}$,
\item there exists a proper function $f:S\rightarrow\R$ such that the
product $\|df\|_{g_0}\cdot\|\omega\|_{g_0}$ grows at most linearly in $d_{g_0+df\otimes df}$
\end{enumerate}
Moreover, if $(S\times \R,g)$ is globally hyperbolic,
\begin{enumerate}[(i)]
\item[(iv)] the slices $S\times\{t\}$, $t\in\R$, are Cauchy hypersurfaces if $\|\omega\|_{g_0}^2$ grows at most linearly
in $d_h$,
\item[(v)] for any proper function $f:S\rightarrow \R$, $\|\omega\|_{g_0}$ grows at most linearly
in $d_{g_0+df\otimes df}$.
\end{enumerate}
\end{thm}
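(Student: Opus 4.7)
The plan is to reduce everything to assertions about the Fermat metric $F$ of \eqref{fermatmetric} via Theorem \ref{causalstructure}: the slice $S\times\{t\}$ is Cauchy iff $F$ is forward and backward complete, while $(S\times\R,g)$ is globally hyperbolic iff the intersections $B_F^+(x,r)\cap B_F^-(x,r)$ are relatively compact. Because lightlike geodesics are conformally invariant up to reparametrization, I can first normalize to the case $\varphi\equiv 1$. The central algebraic tool is the two-sided lower bound
$$
F(\pm v)\;\geq\;\frac{\sqrt{g_0(v,v)}}{2\sqrt{1+\|\omega\|_{g_0}^2}},
$$
obtained from $F(v)F(-v)=g_0(v,v)$ and $\sqrt{g_0(v,v)+\omega(v)^2}\leq \sqrt{1+\|\omega\|_{g_0}^2}\sqrt{g_0(v,v)}$. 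Combined with $h(v,v)\leq (1+\|\omega\|_{g_0}^2)g_0(v,v)$, this rewrites as $F(\pm v)\geq \tfrac12 \sqrt{\tilde h(v,v)}$ with $\tilde h=h/(1+\|\omega\|_{g_0}^2)^2$.

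For (i), the inequality $F(\pm v)\geq \tfrac12\sqrt{\tilde h(v,v)}$ shows every forward or backward $F$-ball is contained in a $\tilde h$-ball of twice the radius. Completeness of $\tilde h$ makes such balls relatively compact, so $F$ is forward and backward complete. For (ii), I would take a forward $F$-Cauchy sequence $\{x_n\}$ and, along a nearly minimizing arc-length parametrized curve $\gamma$, combine the above bound with the hypothesis $\|\omega\|_{g_0}(x)\leq A\,d_{g_0}(x_0,x)+B$ to write $F(\dot\gamma)\geq L'(s)/(2(1+B+A\,d_{g_0}(x_0,x_n)+AL(s)))$, where $L(s)$ is the accrued $g_0$-length. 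A Gronwall-type integration yields
$$
d_F(x_n,x_m)\;\geq\;\tfrac{1}{2A}\ln\!\Bigl(1+\tfrac{A\,d_{g_0}(x_n,x_m)}{1+B+A\,d_{g_0}(x_0,x_n)}\Bigr),
$$
bounding $d_{g_0}(x_n,x_m)$ exponentially in $d_F(x_n,x_m)$. Completeness of $g_0$ then extracts a limit, and the forward-Cauchy property upgrades this to convergence of the whole sequence; the backward case is symmetric.

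For (iii), I would perform the gauge change $\psi_f:(x,t)\mapsto(x,t+f(x))$. By Proposition \ref{F+df} the new Fermat metric is $F-df$, and direct computation of $\psi_f^*g$ gives new stationary data $(g_0+2\omega\otimes df-df\otimes df,\,\omega-df)$; one must also verify $S_f$ is spacelike, i.e.\ $F(v)>df(v)$, which follows once the forward/backward completeness output is secured. The Riemannian part can be rearranged so that $g_0+df\otimes df$ plays the role of the new background Riemannian metric (this is why $d_{g_0+df\otimes df}$ appears in the hypothesis), and properness of $f$ together with completeness of $g_0+df\otimes df$ combines with the assumed linear growth of $\|df\|_{g_0}\|\omega\|_{g_0}$ in $d_{g_0+df\otimes df}$ to give exactly the linear growth required to apply (ii) to the new splitting. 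Thus $S_f$ is a Cauchy hypersurface; since the Killing flow of $\partial_t$ moves one slice to any other, the original slices $S\times\{t\}$ are Cauchy as well. For (iv), under global hyperbolicity the relative compactness of $B_F^+(x,r)\cap B_F^-(x,r)$ combined with the same symmetric lower bound and the linear $d_h$-growth hypothesis on $\|\omega\|_{g_0}^2$ promotes compactness of symmetric balls to forward and backward completeness; (v) is the necessary-condition companion extracted from the same comparison: if some proper $f$ violated the growth, one would construct an inextendible causal curve avoiding a slice, contradicting Theorem \ref{causalstructure}(b).

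The main technical obstacle is (iii). Steps (i) and (ii) are essentially one algebraic bound plus a Gronwall estimate, and are robust. In (iii) one must simultaneously check that $S_f$ is spacelike, track how the stationary pair $(g_0,\omega)$ transforms under $\psi_f$, and verify that the precise form of the growth hypothesis (with $d_{g_0+df\otimes df}$ rather than $d_{g_0}$) is exactly what makes the new $(g_0^{(f)},\omega^{(f)})$ satisfy the hypothesis of (ii); the choice of the auxiliary metric $g_0+df\otimes df$ is dictated by this bookkeeping and getting the constants right is the heart of the proof.
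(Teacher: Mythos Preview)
The paper does not give a self-contained proof here; it simply refers each of (i)--(v) to the original source \cite{DPS11}. Your approach to (i) and (ii) via the two-sided bound $F(\pm v)\geq \sqrt{g_0(v,v)}\big/\bigl(2\sqrt{1+\|\omega\|_{g_0}^2}\bigr)$ together with a Gronwall-type estimate is sound; this is precisely one of the pinching inequalities the paper displays immediately after the theorem.

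Your treatment of (iii), however, has a genuine gap. Even granting everything up to the point where the slice $S\times\{0\}$ is Cauchy in $(S\times\R,g_f)$, what this says in the original spacetime $(S\times\R,g)$ is that the \emph{graph} $S_f=\{(x,f(x)):x\in S\}$ is a Cauchy hypersurface, not that the horizontal slices $S\times\{t\}$ are. The assertion that ``the Killing flow of $\partial_t$ moves one slice to any other'' fails in the sense you need: the flow $\phi_s(x,t)=(x,t+s)$ sends $S\times\{0\}$ to $S\times\{s\}$, but it never sends $S_f$ to a horizontal slice unless $f$ is constant. Global hyperbolicity (which does follow once $S_f$ is Cauchy) is strictly weaker than $S\times\{0\}$ being Cauchy, as Theorem \ref{causalstructure} makes explicit by separating condition (b) from its final statement about forward and backward completeness of $F$. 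There are subsidiary problems as well: you invoke completeness of $g_0+df\otimes df$, which is not part of the hypothesis of (iii), and the verification that $S_f$ is spacelike is circular as written. The same conflation of ``globally hyperbolic'' with ``$S\times\{t\}$ is Cauchy'' undermines your sketch of (v): producing an inextendible causal curve that misses a horizontal slice would contradict only the latter, not the former, so it does not yield a contradiction with the hypothesis of (v).
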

\begin{proof}
For $(i)$ and $(iv)$ see \cite[Theorem 2]{DPS11}. For $(ii)$, see part (1) of Proposition 2 in \cite{DPS11} and for $(iii)$ and $(v)$, \cite[Theorem 4]{DPS11}.
\end{proof}
Indeed, in \cite{DPS11}, the authors obtain several interesting pinching inequalities as
\[\frac{\sqrt{h(v,v)}}{2(1+\|\omega\|_{g_0}^2)}\leq F(v)\leq 2\sqrt{h(v,v)}\]
and
\[\frac{\sqrt{g_0(v,v)}}{\sqrt{1+\|\omega\|^2_{g_0}}+\|\omega\|_{g_0}}\leq F(v)\leq (\sqrt{1+\|\omega\|^2_{g_0}}+\|\omega\|_{g_0})\sqrt{g_0(v,v)}\]
for every $v\in TS$ (see \cite[Propositions 1 and 2]{DPS11}).
We point out that in \cite{San97} (especially in Corollary 3.5) there are some results in the same direction as the last theorem.

As a further relation between Causality of a standard stationary spacetime and 
Randers metrics, Cauchy developments will
be constructed in terms of the Fermat metric. 
A subset $A$ of a
spacetime $M$ is {\it achronal} if no  $x,y\in A$ satisfies $x\ll
y$; in this case,  the {\it future (resp. past) Cauchy
development} of $A$, denoted by $D^+(A)$ (resp. $D^-(A)$), is the
subset of points $p\in M$  such that every  past- (resp. future)-
inextendible causal curve through $p$ meets $A$. The union
$D(A)=D^+(A)\cup D^-(A)$ is the {\it Cauchy development} of $A$.
The {\it future  (resp. past) Cauchy horizon $H^+(A)$ (resp. $H^-(A)$)} is
defined as
\[H^\pm(A)=\{p\in \bar{D}^{\pm}(A):I^\pm(p)\,\,\text{does not meet $D^\pm(A)$}\}.\]
Intuitively, $D(A)$ is the region of $M$ a priori predictable from
data in $A$,  and its  {\em horizon} $H(A)=H^+(A)\cup H^-(A)$, the
boundary of this region.

\begin{prop}\label{p4.5}
Let $(S\times \R,  g)$ be a standard stationary spacetime as in
\eqref{e1} such that  $S\times\{t_0\}$ is a Cauchy hypersurface,  $A\subset
S$, and $A_{t_0}=A\times \{t_0\}$  the corresponding (necessarily
achronal) subset of $S\times \{t_0\}$. Then
\begin{equation}\label{cauchydevelop}
D^+(A_{t_0})=\{(y,t):d_F (x,y)>t-t_0\,\,\text{for every $x\notin
A$ and $t\geq t_0$}\},
\end{equation}
\begin{equation}\label{cauchydevelop-}
D^-(A_{t_0})=\{(y,t):d_F(y,x)>t_0-t\,\,\text{for every $x\notin
A$ and $t\leq t_0$}\}.\end{equation} 
Moreover, the Cauchy horizons can be described as
\begin{equation}\label{horizon}
H^+(A_{t_0})=\{(y,t):\inf_{x\notin A}d_F(x,y)=t-t_0
\}
\end{equation}
\begin{equation}\label{horizon-}
\quad H^-(A_{t_0})=\{(y,t):\inf_{x\notin A} d_F(y,x)=t_0-t
\}.
\end{equation}
\end{prop}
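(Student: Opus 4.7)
The plan is to derive both formulas from the chronological expressions in Proposition \ref{bolas}, exploiting global hyperbolicity forced by the Cauchy hypersurface assumption. Since $S\times\{t_0\}$ is Cauchy, Theorem \ref{causalstructure} makes $(S\times\R, g)$ globally hyperbolic, hence causally simple, so $J^\pm(p)=\overline{I^\pm(p)}$; combining with Proposition \ref{bolas} yields
\[J^+(x,t_0)=\{(y,t): t\geq t_0,\ d_F(x,y)\leq t-t_0\},\]
and symmetrically for $J^-$. Thus the existence of a past-pointing causal curve from $(y,t)$ to $(x,t_0)$ is equivalent to $d_F(x,y)\leq t-t_0$.

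For \eqref{cauchydevelop} I would argue both inclusions. If $d_F(x,y)>t-t_0$ for every $x\notin A$, then any past-inextendible causal curve $\gamma$ through $(y,t)$ meets $S\times\{t_0\}$ at some $(x,t_0)$ by the Cauchy property, giving $d_F(x,y)\leq t-t_0$; the hypothesis then forces $x\in A$, so $\gamma$ meets $A_{t_0}$. Conversely, if some $x\notin A$ satisfies $d_F(x,y)\leq t-t_0$, I concatenate a past-directed causal curve from $(y,t)$ to $(x,t_0)$ with any past-inextendible extension through $(x,t_0)$; achronality of the Cauchy hypersurface prevents the extension from meeting $S\times\{t_0\}$ again, so the resulting inextendible curve avoids $A_{t_0}$. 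The formula \eqref{cauchydevelop-} follows by the dual argument with backward Fermat balls.

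For the horizon, set $\phi(y)=\inf_{x\notin A}d_F(x,y)$, which is continuous in the manifold topology (it is $1$-Lipschitz against either the forward or the backward Fermat distance). I would first verify $\overline{D^+(A_{t_0})}=\{(y,t):t\geq t_0,\ \phi(y)\geq t-t_0\}$ by approximating a point with $\phi(y)=t-t_0$ via $(y, t-1/n)\in D^+(A_{t_0})$, and noting that $\phi(y)<t-t_0$ persists on a neighborhood. Then for $H^+(A_{t_0})$ I split on the value of $\phi(y)$: if $\phi(y)>t-t_0$, then $(y,t+\epsilon)\in I^+(y,t)\cap D^+(A_{t_0})$ for small $\epsilon>0$, excluding $(y,t)$ from the horizon; if $\phi(y)=t-t_0$, the triangle inequality gives
\[\phi(z)\leq \phi(y)+d_F(y,z)<(t-t_0)+(s-t)=s-t_0\]
for every $(z,s)\in I^+(y,t)$, so $I^+(y,t)\cap D^+(A_{t_0})=\emptyset$ and $(y,t)\in H^+(A_{t_0})$. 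The analogous past-directed argument yields \eqref{horizon-}.

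The main obstacle I anticipate is the upgrade from chronological to causal statements: Proposition \ref{bolas} characterizes only the \emph{open} Fermat balls, whereas the Cauchy development needs the closed causal relation and the boundary case $d_F(x,y)=t-t_0$. Causal simplicity from global hyperbolicity is precisely what makes $J^+(x,t_0)$ the closed Fermat ball, and it is the strict inequality $d_F(y,z)<s-t$ built into the definition of $I^+(y,t)$ that powers the triangle-inequality argument placing points with $\phi(y)=t-t_0$ on the horizon even when the infimum defining $\phi(y)$ is not attained.
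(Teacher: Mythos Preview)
The paper does not actually prove this proposition here; it simply refers the reader to \cite[Proposition~4.7]{CJS11}. Your argument is a self-contained derivation built from Proposition~\ref{bolas} together with the causal simplicity furnished by Theorem~\ref{causalstructure}, which is precisely the machinery that \cite{CJS11} sets up, so you are effectively reconstructing the cited proof rather than proposing an alternative route. The treatment of $D^\pm(A_{t_0})$ is correct: the key points---that $J^+(x,t_0)$ is the closed forward Fermat ball once causal simplicity is available, and that a spacelike Cauchy slice is met exactly once by any inextendible causal curve---are exactly what is needed.

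One small gap in the horizon argument: your approximation $(y,t-1/n)\in D^+(A_{t_0})$ used to place $\{\phi=t-t_0\}$ inside $\overline{D^+(A_{t_0})}$ only works when $t>t_0$, since for $t=t_0$ the approximants drop below the Cauchy slice and lie outside $D^+(A_{t_0})$. At the base level $t=t_0$ (equivalently $\phi(y)=0$, so $y\in\overline{S\setminus A}$) you must argue differently---for instance, if $y\in A$ then $(y,t_0)\in A_{t_0}\subset D^+(A_{t_0})$ already, while if $y\notin A$ one approximates spatially through points of $A$. This is a boundary technicality rather than a structural problem with your approach.
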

\begin{proof}
See \cite[Proposition 4.7]{CJS11}.
\end{proof}
Last proposition can be used to study the differentiability of the Cauchy horizon in terms of the
distance computed with the Fermat metric from a closed subset \cite[Theorem 4.10]{CJS11} and vice versa (see
\cite[Subsection 5.4]{CJS11}.
\section{Causal boundaries and Fermat metrics}
In General Relativity it is important to complete the spacetime with some kind of boundary. 
One way to obtain an intrinsic completion is using the causal structure. This completion
has been largely discussed because of some topological pathologies, but it seems that after 
\cite{FHS10a} the definition and the topology is now satisfactory. As the causal completion (or c-completion for short) depends only on the causal structure of the spacetime, it is expectable
that in conformally standard stationary spacetimes can be computed in terms of Fermat metric. 
Let us recall that the c-completion is constructed in strongly causal spacetimes by adding some ideal points to the spacetime in such a way that timelike curves always have an endpoint in the new space. This is done by identifying the points of the space with PIP's (resp. PIF's), that is, proper indecomposable past (resp. future) sets, in other words, every point $p\in M$ is identified with $I^-(p)$ and $I^+(p)$. Then we add to the spacetime the
TIP's (resp. TIF's), that is, terminal indecomposable past (resp. future) sets. Then the future (resp. past) c-completion $\hat{\partial} M$ (resp. $\check{\partial} M$) is given  by the TIP's (resp. TIF's). Moreover, $\hat{M}:=M\cup\hat{\partial} M$ and $\check{M}:=M\cup\check{\partial} M$.  In order to obtain the causal completion of $M$, we must identify some TIP's and TIF's. This is done by means of the $S$-relation. Denote $\hat{M}_\emptyset=\hat{M}\cup \{\emptyset\}$ (resp. $\check{M}_\emptyset=\check{M}\cup \{\emptyset\}$).
The $S$-relation is defined in $\hat{M}_\emptyset\times \check{M}_\emptyset$ as follows. If $(P,F)\in \hat{M}\times\check{M}$, then $P\sim_S F$ if and only if
\begin{itemize}
\item[(i)]
$F$ is included and a maximal indecomposable future set in $\uparrow P$ (the common future of $P$) and
\item[(ii)] $P$ is included and a maximal indecomposable past set in $\downarrow F$ (the common past of $F$).
\end{itemize}
Moroever,  we also put
\[P\sim_S \emptyset, \quad\quad \emptyset\sim_S F.\]
In particular, the only $S$-relations between PIP's and PIF's are $I^+(p)\sim_S I^-(p)$. Then the c-completion $\bar{M}$ is the quotient set $\hat{M}_\emptyset\times\check{M}_\emptyset/\sim_S$ endowed with the {\em chronological topology} (see Definition 2.2 and the paragraph below in \cite{FHS10b}). We can indentify
$M\equiv \{(I^-(p),I^+(p)):p\in M\}$ and define the {\em c-boundary} as $\partial M:=\bar{M}\setminus M$. We say that the c-completion is simple as a point set when every TIP (resp. TIF) determines a unique pair in $\partial M$ (for topologically simple see \cite[Definition 2.4]{FHS10b}).

Up to the completion of the Finsler manifold $(S,F)$, there are several non-equivalent ways to do it. You can compute the forward (resp. backward) Cauchy boundary $\partial^+_C S$ (resp. $\partial^-_C S$) by adding ideal points in such a way that you can always obtain the convergence of forward (resp. backward) Cauchy sequences.
Then the forward (resp. backward) {\em Cauchy completion} is $S_C^+:=S\cup \partial_C^+S$ (resp.
$S_C^-:=S\cup \partial_C^-S$). 
Moreover, denote $\partial_C^sS:=\partial^+_C S\cap \partial^-_C S$ and $S_C^s=S\cup \partial_C^sS$. 
The map 
\[d_Q:S_C^+\times (S_C^+\cup S_C^-)\rightarrow [0,\infty]\]
 defined by 
\[d_Q([\{x_n\}],[\{y_m\}]):=\lim_n(\lim_m d_F(x_n,y_m))\]
is a quasi-distance (see \cite[Propositions 3.25 and 3.32]{FHS10b}). 

The {\em Gromov completion} is obtained by considering the subset ${\mathcal L}_1(S,d_F)$ of $d_F$-Lipschitz functions on $S$ \cite[Definition 5.2]{FHS10b} and ${\mathcal L}_1(S,d_F)_*={\mathcal L}_1(S,d_F)/\R$ (two functions are related when they differ in a constant). Then define the maps
\begin{align*}
j^+:S_C^+\rightarrow {\mathcal L}_1(S,d_F), \quad x\to -d_x^+,\quad \text{where $d_x^+=d_Q(\cdot,x)$},\\
j^-:S_C^-\rightarrow {\mathcal L}_1(S,d_F), \quad x\to +d_x^-,\quad \text{where $d_x^-=d_Q(x,\cdot)$},
\end{align*}
which are injective (see \cite[Proposition 5.7]{FHS10b}). We can
identify the points of $S$ with the class of  (minus) the distance function to (resp. from) the point, which is denoted by $j^+(S)$ (resp. $j^-(S)$). The forward (resp. backward) Gromov completion $S_G^+$ (resp. $S_G^-$) is the closure of $S$ in ${\mathcal L}_1(S,d_F)_*$ considering the compact-open topology. Observe that this topology is equivalent to that of convergence on compact subsets and to that of
pointwise convergence. 

Let $C^+(S)$ be the set of piecewise smooth curves $c:[\alpha,+\Omega)\rightarrow S$, $\Omega\leq \infty$, such that $F(\dot c)<1$. For $c\in C^+(S)$, the associated (forward) Busemann function $b_c^+:S\rightarrow (-\infty,\infty]$ is
$b_c^+(\cdot)=\lim_{s\to\Omega}(s-d_F(\cdot,c(s)))$, which always exists because is increasing
\cite[Lemma 4.14]{FHS10b}. Observe that $b_c^+$ is finite or infinite everywhere. Denote
\[B^+(S):=\{b_c^+<\infty : c\in C^+(S)\}.\]
Moreover, if $\Omega<\infty$, then there exists some $\bar{x}\in S_C^+$ such that $b_c^+(x)=\Omega-d_F(x,\bar{x})$ for all $x\in S$ (here $d_F$ extended to $S_C^+$), and we denote
\[b_c^+=d_p^+:=\Omega-d_F(\cdot,\bar{x}),\]
with $p=(\bar{x},\Omega)\in S_C^+\times\R$.
If $\Omega=\infty$, we say that $b_c^+$ is a properly Busemann function and we write
\[{\mathcal B}^+(S):=\{b_c^+<\infty: c\in C^+(S),\Omega=\infty\}.\]
The {\em Busemann completion} as a point set is the quotient $S_B^+:=B^+(S)/\R\subset S_G^+$ and the (forward) {\em Busemann boundary}
$\partial_B^+S:=S_B^+\setminus S (\subset \partial_G^+S)$. Furthermore, the (forward) {\em properly Busemann boundary} is defined as $\partial_{\mathcal B}^+S:={\mathcal B}^+(S)/\R$. Then $\partial_B^+S=j^+(\partial_C^+S)\cup\partial_{\mathcal B}^+S$. $S_B^+$ will be endowed with the chronological topology (see \cite[Subsection 5.2.2]{FHS10b}). 

Recall that given a topological space $T$, the {\em forward cone} with base $T$ is constructed as the quotient topological space $(T\times (-\infty,\infty])/\sim$, where the unique non-trivial identifications are $(x,\infty)\sim (x',\infty)$ for all $x,x'\in T$. Moreover, the class of $(x,\infty)$ is called the {\em apex} of the cone.

Finally, given a future-pointing timelike curve $\gamma:[\alpha,\Omega)\rightarrow M$, parametrized as $\gamma(t)=(c(t),t)$, observe that $I^-[\gamma]=\{(x',t')\in M : t'<b_c^+(x')\}$. Therefore the indecomposable past sets ($\not=M$) can be identified with $B^+(S)$. When $b_c^+\equiv\infty$, then $I^-[\gamma]=M$ and it will be denoted with $i^+$. If $(P,F)\in\partial M$ with $\emptyset\not=P=P(b_c^+)$, we define the line over $(P,F)$, denoted as $L(P,F)$ as 
\begin{itemize}
\item If $F=\emptyset$ then $L(P,\emptyset):=\{(P',\emptyset):P'=P(b_c^+ +k),k\in\R\}$,
\item If $F\not=\emptyset$, it follows that $P=P(d_p^+)$ and $F=F(d_{p'}^-)$ (see \cite[Theorem 6.15]{FHS10b}), and then $L(P,F):=\{(P',F'):P'=P(d_p^+ +k), F'=F(d_{p'}^-+k), k\in\R\}.$ 
\end{itemize}
A dual definition is assumed for $(P,F)\in\partial M$ with $\emptyset\not= F=F(b_c^-)$.

Let us first describe the structure of the c-completion as a point set.
\begin{thm}\label{th} Let $(S\times\R,g)$ be a (conformally) standard stationary spacetime
as in \eqref{e1conf} and denote $M=S\times\R$.  Then,
the c-boundary $\partial M$ has the following structure:
 \begin{itemize} 
 \item[(i)] The future
(resp. past) c-boundary $\hat{\partial} M$ (resp. $\check{\partial} M$) is naturally a point set cone with base $\partial_B^+ S$ (resp. $\partial_B^-
S$) and apex $i^+$ (resp. $i^-$). 
\item[(ii)] A pair $(P,F)\in
\partial V$ with $P\neq \emptyset$ satisfies that $P=P(b_c^+)$ for
some $c\in C^+(S)$ and:
\begin{itemize}
\item[(a)] If $b_c^+\equiv \infty$ then $P=M, F=\emptyset$.
\item[(b)] If $b_{c}^+\in {\mathcal B}^+(S)(\equiv
\partial_{\mathcal B}^+S\times\R)$ then $F=\emptyset$. 
\item[(c)] If
$b_c^+\in B^+(S)\setminus {\mathcal B}^+(S)$, then $b_c^+=d_p^+$ with $p=(x^+,\Omega^+)\in 
\partial_C^+S\times\R$, $P=P(d^+_p)$ and $F\subset
F(d_{p}^-)$. In this case, there are two exclusive possibilities:
\begin{itemize}
\item[(c1)] either $F=\emptyset$, 
\item[(c2)] or $F=F(d_{p'}^-)$
with $p'=(x^-,\Omega^-)\in \partial_C^-S\times\R$ and satisfying
\[\Omega^--\Omega^+=d_Q(x^+,x^-)\] (in this case, $p'$ is not necessarily
unique).
\end{itemize}
Moreover, if $x^+\in \partial_C^s S$, then $p'=p$, $\uparrow
P=F(d_{p}^{-})$ and $P$ is univocally S-related with $F=F(d_p^+)$.
\end{itemize}
A dual result holds for pairs $(P,F)$ with $F\neq \emptyset$. So,
the total c-boundary is the disjoint union of lines $L(P,F)$.

When $\partial M$ is simple as a point set, it is the quotient set $\hat{\partial} M\cup_d \check{\partial} M/\sim_S $ of the partial boundaries $\hat{\partial} M, \check{\partial} M$  under the S-relation.

\end{itemize}
\end{thm}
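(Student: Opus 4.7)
The plan is to exploit the identity stated just before the theorem,
\[I^-[\gamma]=\{(x',t')\in M: t'<b_c^+(x')\},\]
valid for any future-pointing timelike curve $\gamma(t)=(c(t),t)$, which reduces the entire description of TIPs to the class $B^+(S)$ of finite Busemann functions on $(S,F)$, with the $\R$-coordinate acting additively on these functions. The dual statement for TIFs follows analogously using the reverse Fermat metric $\tilde F$.

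First I would verify that every non-trivial TIP in $M$ arises in this way. Given an inextendible future-pointing timelike curve whose past is a TIP, one reparametrizes it by the global time coordinate $t$, which is strictly increasing because $\partial_t$ is Killing timelike. Writing $\gamma(t)=(c(t),t)$, the timelike condition combined with \eqref{e1} forces $F(\dot c)<1$ pointwise, so $c\in C^+(S)$. This identifies the set of non-trivial IPs with $B^+(S)$. For part (i), the cone structure then follows by observing that the $\R$-translation $(x,t)\mapsto(x,t+k)$ sends $\gamma$ to another timelike curve whose associated Busemann function is $b_c^+ + k$. Hence the additive $\R$-action on finite Busemann functions has quotient $\partial_B^+S=B^+(S)/\R$, which serves as the base of the cone, while the apex $i^+$ corresponds to the unique fixed point $b_c^+\equiv+\infty$ (equivalently $P=M$).

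For part (ii), the case split is driven by the domain $[\alpha,\Omega)$ of $c$. If $b_c^+\equiv\infty$ (case (a)), then $P=M$ has empty common future and $F=\emptyset$. If $\Omega=\infty$ and $b_c^+$ is properly Busemann (case (b)), a Fermat-distance estimate shows that $\uparrow P$ contains no indecomposable future set, forcing $F=\emptyset$. If $\Omega<\infty$ (case (c)), then $c$ has finite Fermat length and converges to a point $x^+\in\partial_C^+S$, and $b_c^+$ coincides with $d_p^+$ for $p=(x^+,\Omega)$. Using Proposition \ref{bolas} combined with the quasi-distance $d_Q$ extended across the Cauchy boundary, the common future $\uparrow P(d_p^+)$ is computed as a region of the form $\{(y,t): t-\Omega < d_Q(x^+,y)\}$; the maximality condition in the definition of the $S$-relation then translates, via the triangle inequality for $d_Q$, into the equality $\Omega^- - \Omega^+ = d_Q(x^+,x^-)$ for any candidate pair $p'=(x^-,\Omega^-)\in\partial_C^-S\times\R$. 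When $x^+\in\partial_C^sS$, the symmetry of the Cauchy point forces $p'=p$, $\uparrow P=F(d_p^-)$, and uniqueness of the $S$-pairing.

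The hard part will be case (c): the quasi-distance $d_Q$ need not be symmetric, or even finite, across the Cauchy boundary, so verifying both existence and maximality of $S$-paired TIFs requires careful Finslerian estimates relating forward, backward, and symmetric Cauchy points. The final assertion, that a simple c-completion is the quotient of $\hat{\partial}M\cup_d\check{\partial}M$ under $\sim_S$, is then a formal consequence of (i), (ii), and the general framework of \cite{FHS10b}.
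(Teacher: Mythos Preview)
The paper does not actually prove this theorem: its entire proof reads ``See \cite[Theorem 1.2]{FHS10b}.'' This is a survey article, and the result is quoted verbatim from the Flores--Herrera--S\'anchez paper on Gromov, Cauchy and causal boundaries. Your sketch is therefore not competing with any argument in the present paper; rather, you are outlining the strategy of the cited source.

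That outline is broadly correct and tracks the architecture of \cite{FHS10b}: identify TIPs with sublevel sets of Busemann functions via $I^-[\gamma]=\{t'<b_c^+(x')\}$, read off the cone structure from the additive $\R$-action, and then run the trichotomy on $b_c^+$. Two places deserve more care if you intend this as a self-contained argument. First, in case (b) you assert that $\uparrow P$ ``contains no indecomposable future set''; what is actually needed (and what \cite{FHS10b} proves) is that no nonempty IF in $\uparrow P$ can be \emph{maximal} there in the sense required by the $S$-relation --- the common future need not be empty. Second, in case (c) your formula for $\uparrow P(d_p^+)$ has the inequality reversed: the common future is $\{(y,t): t>\Omega+d_Q(x^+,y)\}$, not $t-\Omega<d_Q(x^+,y)$. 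With those corrections, your roadmap matches the original proof.
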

\begin{proof}
See \cite[Theorem 1.2]{FHS10b}.
\end{proof}
Let us finally describe the causal and topological structures. As to the completions of $(S,F)$, let 
us remark that in the description of the c-completion, we only need Busemann and Cauchy completions,
while Gromov completion is useful to define the Busemann one. Observe that Gromov completion is a compact metrizable topological space and the Busemann one is $T_1$, sequentially compact but not necessarily Hausdorff. This is because,
in $S_B^+$, the topology inherited from the Gromov completion is finer than the chronological topology (otherwise the Busemann completion would not be sequentially compact). As a matter of fact, Busemann and Gromov completions coincide both as a point set and as a topological space when $S_B^+$ is Hausdorff.
\begin{thm}Let $(S\times\R,g)$ be a (conformally) standard stationary spacetime
as in \eqref{e1conf} and denote $M=S\times\R$.  Then, for each  $(P,F)\in \partial M$,
the line $L(P,F)$ is 
\begin{itemize}
\item[(i)] timelike if $P=P(d_p^+)$ and $F=F(d_p^-)$ for
some $p\in  \partial_C^s M\times \R$, 
\item[(ii)] horismotic if either $P$ or
$F$ are empty and 
\item[(iii)] locally horismotic otherwise 
\end{itemize}
(see \cite[Definition 6.22]{FHS10b}).

As to the topology of the c-completion:
\begin{itemize}\item[(iv)] If $S_B^+ $ (resp.
$S_B^- $) is Hausdorff, the future (resp. past) causal boundary
has the structure of a (topological) cone  with base $\partial_B^+S$ (resp. $\partial_B^-S$)
and apex $i^+$ (resp. $i^-$). 
\item[(v)] If $S_{C}^{s}$ is
locally compact and $d_Q^+$ is a generalized distance, then
$\overline{M}$ is simple  and so, it coincides with the quotient
topological space $\hat{M}\cup_d \check{M}/\sim_S$ of the partial
completions $\hat{M}$ and $\check{M}$ under the S-relation.
\end{itemize}
Summarizing, if $S_C^s $ is locally compact, $d_Q$ is a
generalized distance and $S_B^\pm $ is Hausdorff, $\partial M$
coincides with the quotient topological space
$(\hat{\partial}M\cup_d \check{\partial}M)/\sim_{S}$, where
$\hat{\partial}M$ and $\check{\partial}M$ have the structure of
cones with bases $\partial_B^+ S, \partial_B^- S$ and apexes
$i^+,i^-$, resp.
\end{thm}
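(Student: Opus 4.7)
The strategy is to feed the point-set classification of Theorem \ref{th} into the causality-via-Fermat dictionary established in Propositions \ref{bolas} and \ref{p4.5}, and then to separately handle the topological upgrades in (iv)--(v). The causal assertions (i)--(iii) are essentially combinatorial once one knows which pairs $(P,F)$ can occur; the harder work is the topological one, which I would largely outsource to the abstract framework of \cite{FHS10b} and verify that the Fermat setting satisfies its hypotheses.

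For the causal character of a line $L(P,F)$, I would argue case by case along the list in Theorem \ref{th}. In case (i), any two pairs on the line are of the form $(P(d_p^+ + k),F(d_p^- + k))$ and $(P(d_p^+ + k'),F(d_p^- + k'))$ with $k<k'$; since $x^+ \in \partial_C^s S$ is both a forward and backward Cauchy endpoint, I can build an approximating sequence of points $(x_n,t_n)\in M$ with $(x_n,t_n+k)\in P(d_p^++k')$ eventually, which by Proposition \ref{bolas} places the earlier pair chronologically below the later one, yielding timelike character. In case (ii), when $F=\emptyset$ the pair $(P,F)$ sits on the cone with apex $i^+$; any $(P',F')$ further up the line has $P'\supsetneq P$, so the boundary points can only be linked causally, not chronologically, i.e.\ horismotically. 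Case (iii) is the mixed situation $(c2)$ with $x^+\notin\partial_C^s S$: one side is Cauchy but the other is not, and the vertical translation produces only \emph{local} timelike curves between pairs, which by definition is local horismoticity.

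For the topological statements I would proceed as follows. The point-set cone structure in (iv) is already given by Theorem \ref{th}(i), so the task is to show that, under the Hausdorffness of $S_B^+$, the chronological topology on $\hat{\partial}M$ agrees with the quotient cone topology on $\partial_B^+ S\times (-\infty,\infty]/\sim$. Hausdorffness of $S_B^+$ means the Busemann and Gromov completions coincide, which gives a compact metrizable base; the key check is that a sequence $(P_n) = (P(b_{c_n}^+ + k_n))$ converges chronologically iff $b_{c_n}^+\to b_c^+$ in $\partial_B^+ S$ and $k_n\to k$, and that $k_n\to \infty$ is the only way to converge to $i^+$. This follows from the characterization of PIP-convergence through Busemann functions in \cite[\S 5.2]{FHS10b}. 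For (v), the local compactness of $S_C^s$ together with $d_Q$ being a generalized distance is exactly what rules out the non-uniqueness of $p'$ in Theorem \ref{th}(c2) and forces every TIP/TIF to be univocally $S$-related; simplicity as a point set then follows, and the identification of $\overline{M}$ with $(\hat{M}\cup_d\check{M})/\sim_S$ as topological spaces is the content of the topological simplicity results of \cite[Theorem 1.1]{FHS10b}.

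The main obstacle is not the classification itself but the topological compatibility between the intrinsically defined chronological topology on $\overline{M}$ and the quotient of the Busemann/Cauchy completions of $(S,F)$. Verifying this equivalence rigorously requires a careful analysis of convergence of PIP's through their Busemann representatives and of the behaviour of $d_Q$ across $\partial_C^+ S$ and $\partial_C^- S$; I would not re-derive these facts but would cite Theorems 1.1 and 1.2 of \cite{FHS10b} after noting that Propositions \ref{bolas} and \ref{p4.5} already express all chronological/causal data of $(S\times\R,g)$ purely in terms of the Fermat distance $d_F$, so the abstract hypotheses of the cited theorems are met.
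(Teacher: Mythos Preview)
Your proposal is correct and aligns with the paper's approach: the paper is a survey and its entire proof here is the single citation ``See \cite[Theorem 1.2]{FHS10b}'', which is exactly where you ultimately defer for both the causal classification and the topological compatibility. Your additional case-by-case sketches for (i)--(iii) and the outline for (iv)--(v) are reasonable intuition, but the paper itself provides none of that and simply points to the source.
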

\begin{proof}
See \cite[Theorem 1.2]{FHS10b}.
\end{proof}
\section{Existence of lightlike geodesics}
The study of multiplicity of lightlike geodesics between an event and a vertical line was the original
scope of the use of Fermat metrics. For example, in \cite{MaPi98}, the authors use the shortening
method applied to the Fermat metric to give some existence results. It is remarkable that in \cite{MaPi98} the authors refer to the Fermat metric as a pseudo-Finsler metric and they are concerned about the local existence, uniqueness and regularity of minimizers of the length functional
(see \cite[Appendix A.1]{MaPi98}). Of course, this is because, in that moment, they were not aware of the fact that Randers metrics are fiberwise strongly convex. By the way, it seems that this fact is not collected in the classical books of Finsler geometry available at the time that \cite{MaPi98} was published. 
This was done just two years later in \cite[Section 11.1]{BaChSh00}. 

Once you know that Randers metrics are fiberwise strongly convex, the local existence and uniqueness of geodesics are guaranteed. Moreover, studying lightlike geodesics between an event and a vertical line or ligthlike
geodesics spatially closed in a conformally standard stationary spacetime is equivalent to studying the existence and multiplicity of geodesics between two points or closed geodesics of the Fermat metric, respectively (up to the case with boundary see \cite{CGS11} or \cite{CapPro} in these proceedings). This can be done by applying the theories of Lyusternik-Schnirelmann and Morse to the energy functional of a Finsler manifold $(M,F)$.  In fact, you can consider the space of curves of Sobolev class $H^1$ on $M$. 
Recall that this space does not depend on the Riemannian metric that you fix on $M$. Thus we fix an auxiliary Riemannian metric $h$ on $M$. Moreover, fix a smooth submanifold $N$ of $M\times M$ and consider the collection  $\Lambda_N(M)$ of the curves $x:[0,1]\rightarrow M$, having $H^1$ regularity, that is, $x$ is absolutely continuous and $\int_0^1 h(\dot x,\dot x)\df s$ is finite, and 
with $(x(0),x(1))\in N\subseteq M\times M$. Then it is well-known that $\Lambda_N(M)$ is a Hilbert manifold modeled on any of the equivalent Hilbert spaces of all the $H^1$-sections with endpoints in $TN$ of the pulled back bundle $x^*TM$, with $x$ a regular curve in $\Lambda_N(M)$. Let us observe 
that even when the strong convexity condition is available, we must pay some attention to the fact that
$F^2$ is not even $C^2$ on the zero section unless $F^2$ is quadratic, i.e. a Riemannian metric (see \cite{Warner65}).

\begin{prop}\label{regularity}
A   curve  $\gamma\in\Lambda_N(M)$ is  a geodesic for the Finsler manifold $(M,F)$ satisfying 
\begin{equation}\label{orto}
g_{\dot\gamma(0)}(V,\dot \gamma(0))=g_{\dot\gamma(1)}(W,\dot \gamma(1))
\end{equation}
for any $(V,W)\in T_{(\gamma(0),\gamma(1))}N$
if and only if it is a (non constant) critical point of the energy functional $E_F$ on $\Lambda_N(M)$.
\end{prop}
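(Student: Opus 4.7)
The plan is to use the first variation of the energy functional $E_F$ on the Hilbert manifold $\Lambda_N(M)$. Since $F^2$ is $C^1$ on $TM$, the functional $E_F$ is of class $C^1$ on $\Lambda_N(M)$, and in any coordinate chart around $\gamma$ its differential in direction $\xi \in T_\gamma\Lambda_N(M)$ has the form
\[dE_F(\gamma)[\xi] = \frac{1}{2}\int_0^1\bigl(\partial_{x^i}F^2(\gamma,\dot\gamma)\,\xi^i + \partial_{\dot x^i}F^2(\gamma,\dot\gamma)\,\dot\xi^i\bigr)\,ds.\]
At points where $\dot\gamma\neq 0$, Euler's identity for the $2$-homogeneous function $F^2$ gives $\tfrac12\,\partial_{\dot x^i}F^2(\dot\gamma)\,\xi^i = g_{\dot\gamma}(\dot\gamma,\xi)$, so after integration by parts the boundary term reads $\bigl[g_{\dot\gamma}(\dot\gamma,\xi)\bigr]_0^1$ and the interior integrand is, up to sign, the Euler--Lagrange expression whose vanishing is the Finsler geodesic equation.

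For the ``if'' direction, a nonconstant Finsler geodesic has constant positive $F$-speed, so $\dot\gamma$ avoids the zero section and the identification above is legitimate. The interior term vanishes by the geodesic equation, and the boundary term vanishes because condition (\ref{orto}) asserts exactly that $g_{\dot\gamma(1)}(\dot\gamma(1),W) = g_{\dot\gamma(0)}(\dot\gamma(0),V)$ for every admissible $(V,W)$.

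For the ``only if'' direction, the main task is to establish that $\dot\gamma$ avoids the zero section before invoking the fundamental tensor at all. I plan to test $E_F$ against the tangential reparametrization variations $\gamma_t(s) = \gamma(s + t\eta(s))$ with $\eta\in C_c^\infty(0,1)$ (so that $\gamma_t \in \Lambda_N(M)$ for $|t|$ small); the change of variable $u = s+t\eta(s)$ yields
\[0 = \left.\frac{d}{dt}\right|_{t=0}E_F(\gamma_t) = \frac{1}{2}\int_0^1 F(\dot\gamma)^2\,\eta'(s)\,ds,\]
so $F(\dot\gamma)$ is almost-everywhere equal to a constant $c$, and nonconstancy of $\gamma$ forces $c>0$. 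From here the classical argument takes over: testing against $\xi$ with compact support in $(0,1)$ yields the Euler--Lagrange equation in distributional form on a domain where $F^2$ is smooth, standard ODE regularity lifts $\gamma$ to a smooth geodesic, and a final variation with prescribed values $(\xi(0),\xi(1)) = (V,W) \in T_{(\gamma(0),\gamma(1))}N$ then extracts (\ref{orto}) from the remaining boundary term.

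The main obstacle is exactly the nonsmoothness of $F^2$ on the zero section: the Riemannian-like first variation formula only becomes available once $\dot\gamma$ is known to avoid that section, and this in turn rules out the pathology flagged in the paragraph preceding the statement (where it is emphasised that $F^2$ fails to be $C^2$ at the origin unless $F$ is Riemannian). The reparametrization trick above is the cleanest way to secure constant nonzero speed using only the $C^1$ regularity of $F^2$; once this hurdle is cleared, the remaining steps are formally identical to the Riemannian case.
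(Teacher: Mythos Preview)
The paper does not actually prove this proposition; it only cites \cite[Proposition~2.1]{CJM11}. Your outline is the natural strategy and the ``if'' direction is fine as written.

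In the ``only if'' direction, however, there is a genuine gap in the order of the steps. You want to extract constant $F$-speed by testing criticality against the reparametrization field $\xi=\eta\,\dot\gamma$ with $\eta\in C_c^\infty(0,1)$. But for a curve $\gamma$ that is a priori only $H^1$, the section $\eta\,\dot\gamma$ is merely $L^2$, not $H^1$, so it is \emph{not} an element of $T_\gamma\Lambda_N(M)$; equivalently, the map $t\mapsto\gamma(\,\cdot+t\eta(\cdot)\,)$ need not be $C^1$ into $\Lambda_N(M)$, and the hypothesis $dE_F(\gamma)=0$ does not by itself force $\tfrac{d}{dt}\big|_{t=0}E_F(\gamma_t)=0$. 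This is exactly the regularity obstacle you flag in your last paragraph, but it bites one step earlier than you allow for: you cannot use the reparametrization trick to reach the zone where $F^2$ is smooth, because the trick itself already needs more than $H^1$ regularity.

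The standard cure, which is what the cited reference does, reverses your order. One first applies the DuBois--Reymond lemma to the weak Euler--Lagrange identity to see that $s\mapsto\tfrac12\partial_{\dot x}F^2(\gamma(s),\dot\gamma(s))$ has an absolutely continuous representative; then one uses that the fibrewise Legendre map $v\mapsto\tfrac12\partial_vF^2(x,v)$ is a homeomorphism of $T_xM$ onto $T_x^*M$ (strict convexity of $F^2$ together with its $C^1$ regularity across the zero section) to deduce that $\dot\gamma$ itself is continuous. Once $\gamma\in C^1$, on the open set $U=\{\dot\gamma\neq 0\}$ the Lagrangian is smooth, the usual bootstrap yields the geodesic equation, and homogeneity makes $F(\dot\gamma)$ locally constant on $U$; continuity of $\dot\gamma$ and nonconstancy of $\gamma$ then force $U=[0,1]$. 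After that your boundary argument for \eqref{orto} goes through unchanged.
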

\begin{proof}
See for example \cite[Proposition 2.1]{CJM11}.
\end{proof}
Moreover, recall that a funcional $J$ defined on a Banach manifold $(X,\|\cdot\|)$ satisfies the Palais-Smale condition if every sequence $\{x_n\}_{n\in\N}$ such that $\{J(x_n)\}_{n\in\N}$ is bounded and $\|dJ(x_n)\|\rightarrow 0$ contains a convergent subsequence. This condition is fundamental
to apply the theories of Lyusternik-Schnirelmann and Morse, which study the relation between the number of critical points and the topology of the manifold. Palais-Smale is satisfied by the energy functional precisely when one of the equivalence conditions of the Generalized Hopf-Rinow Theorem in \ref{hopf-rinow} holds.
 \begin{thm}\label{ps} Let $(M,F)$ be a Finsler manifold with $B_F^+(x,r)\cap B_F^-(x,r)$ relatively compact for every $x\in M$ and $r>0$,
 and $N$,  a closed submanifold on
$M\times M$ such that the first or the second projection of $N$ to $M$ is compact, 
then $E_F$ satisfies the
Palais-Smale condition on $\Lambda_N(M)$. 
\end{thm}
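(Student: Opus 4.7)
The plan is to verify the three standard ingredients of the Palais--Smale condition in turn: an $H^1$--bound on the sequence $\{x_n\}$, relative compactness of its trace in $M$, and $H^1$--convergence of a subsequence via the classical Hilbert--manifold argument for the energy functional.

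First I would extract basic metric information from the energy bound $E_F(x_n)\leq C$. Cauchy--Schwarz yields $\ell_F(x_n)\leq \sqrt{2C}$. Since one projection of $N$ to $M$ is compact, say $\pi_1(N)\subseteq K_0$ with $K_0$ compact, every $x_n(0)$ lies in $K_0$, so $x_n([0,1])\subseteq \overline{B_F^+(K_0,\sqrt{2C})}$; in particular $x_n(1)$ is forward--bounded as well.

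Second, I would confine $\bigcup_n x_n([0,1])$ to a single compact subset of $M$ using the hypothesis that $B_F^+(x,r)\cap B_F^-(x,r)$ is relatively compact. For each $t$, the point $x_n(t)$ is forward--reachable from $x_n(0)\in K_0$ within $F$--length $\sqrt{2C}$ and simultaneously backward--reachable to $x_n(1)$ within the same bound. Combining this with the triangle inequality and with the continuity of $d_F$ on $K_0\times K_0$, one should produce a uniform $R>0$ with $x_n(t)\in B_F^+(p,R)\cap B_F^-(p,R)$ for a fixed base point $p\in K_0$. The hypothesis then gives $\bigcup_n x_n([0,1])\subseteq K$ for some compact $K\subseteq M$.

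Third, on the compact $K$ the Finsler norm $F$ is comparable to any auxiliary Riemannian metric $h$, so the $F$--energy bound translates into an $H^1$--bound for $\{x_n\}$ with respect to $h$. Arzel\`a--Ascoli then produces a uniformly convergent subsequence $x_n\to x_\infty$; closedness of $N$ forces $x_\infty\in \Lambda_N(M)$. The gradient condition $\|dE_F(x_n)\|\to 0$, together with the fiberwise strong convexity of $F^2$ away from the zero section (condition (3) in Section \ref{FinslerMetrics}), provides the uniform coercivity of the second variation needed to upgrade weak $H^1$--convergence to strong $H^1$--convergence, exactly as in the Riemannian case.

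The main obstacle is the compactness step. Because $F$ is generally non--reversible, the forward $F$--length bound does not control reverse distances such as $d_F(x_n(t),x_n(0))$, and the hypothesis on $N$ pins only one endpoint; the argument has to exploit simultaneously the forward and backward reachability along the curve itself together with the Hopf--Rinow type assumption on intersections of forward and backward balls. A secondary technical point is the failure of $F^2$ to be $C^2$ on the zero section (the Warner phenomenon recalled before Proposition \ref{regularity}); this must be handled when passing to the limit in the Euler--Lagrange equations satisfied approximately by $x_n$, typically by verifying that the limiting critical curve has velocity bounded away from zero, so that $F^2$ is smooth along it.
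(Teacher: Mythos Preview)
Your sketch follows the standard template—length bound, compactness of the traces, then $H^1$-convergence—which is indeed the route in the references the paper cites (\cite{CJM11,Mer78}). Steps one and three are outlined correctly. The gap is in the second step, and it is precisely the obstacle you yourself flag but do not actually overcome.

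You want $x_n(t)\in B_F^+(p,R)\cap B_F^-(p,R)$ for a fixed $p\in K_0$. The forward inclusion is fine: $d_F(p,x_n(t))\le d_F(p,x_n(0))+d_F(x_n(0),x_n(t))\le C_1+\sqrt{2C}$ with $C_1=\sup_{q\in K_0}d_F(p,q)$. For the backward inclusion you need $d_F(x_n(t),p)$ bounded. The triangle inequality gives
\[
d_F(x_n(t),p)\le d_F(x_n(t),x_n(1))+d_F(x_n(1),p)\le \sqrt{2C}+d_F(x_n(1),p),
\]
and here the argument stalls: $x_n(1)$ does \emph{not} lie in $K_0$, so ``continuity of $d_F$ on $K_0\times K_0$'' is irrelevant, and the only information available on $x_n(1)$ is the \emph{forward} bound $d_F(p,x_n(1))\le C_1+\sqrt{2C}$. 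Non-reversibility means this says nothing about $d_F(x_n(1),p)$. The ball-intersection hypothesis (equivalently, condition (ii) in Theorem~\ref{hopf-rinow}) asserts that sets which are simultaneously forward \emph{and} backward bounded are relatively compact; it does not force forward balls themselves to be relatively compact, so a forward bound alone on the traces is insufficient to invoke it.

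The result the paper actually cites, \cite[Theorem~3.1]{CJM11}, is proved under the stronger hypothesis that $(M,F)$ is forward (or backward) complete; then the Finslerian Hopf--Rinow theorem makes closed forward (resp.\ backward) balls compact and your Step~2 goes through verbatim. The passage from completeness to the weaker ball-intersection hypothesis is exactly what the ``comments before \cite[Theorem~5.2]{CJS11}'' are invoked for, and that passage requires input your sketch does not supply.
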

\begin{proof}
See \cite[Theorem 3.1]{CJM11} or \cite{Mer78} 
and the comments before \cite[Theorem 5.2]{CJS11}. 
\end{proof}
Again the most difficult part to prove Palais-Smale for the energy functional of a Finsler metric is the lack of differentiability of $F^2$ in the zero section. As $F^2$ is not $C^2$ on the zero section, we can only apply the mean value theorem to the derivatives of $F^2$ away from the zero section. Once Palais-Smale condition is available, we can apply Lyusternik-Schnirelmann theory to obtain multiplicity results of geodesics between two arbitrary points when the manifold is non-contractible (see \cite[Proposition 3.1]{CJM11}). With a different approach, it is possible to prove the existence of only a finite number of geodesics between two non-conjugate points in the presence of a convex function for the Finsler metric \cite[Theorem 2.4]{CJM10}.
\subsection{Morse Theory for lightlike geodesics}
As to the Morse theory for the energy functional in the space of $H^1$-curves, the main difficulty is that $E_F$ is not $C^2$ even in a geodesic unless the restriction of $F^2$ to the geodesic is quadratic (see \cite{AbbSch09}
and also \cite{Cap10}). As a consequence, Morse Lemma cannot be proved in the curves of class $H^1$ with the standard techniques. Even if Morse Theory works for $C^{1,1}$-functionals in Hilbert manifolds (see for example \cite[Chapter 8]{MawWil89}), the  Morse Lemma is
essential to compute the critical groups in terms of the index of the critical point. 
In \cite{CJM10b}, this problem is circumvented using that the space of curves 
with $C^1$ regularity is a Banach manifold densely immersed in the Hilbert space of $H^1$ curves and
$E_F$, restricted to the $C^1$ class, admits second differential in regular curves of $C^1$. To be more precise, consider the second differential of $E_F$ in the space of $C^1$-curves, assume for simplicity that the kernel is trivial and extend it by density to $H^1$. This gives a functional that it is
reprensented by the identity plus a compact operator in a certain scalar product \cite[Lemma 2]{CJM10b}. Moreover, the restriction of this operator to the space of $C^1$-curves gives an invertible operator
\cite[Lemma 5]{CJM10b}, then one can obtain a Morse Lemma for this restriction and the scalar product of $H^1$ \cite[Theorem 7]{CJM10b}. Finally we show that the critical groups of the $C^1$-class coincide with those of $H^1$ using a classical result by Palais. As the geometrical index of a lightlike geodesic coincides with the one of its projection as a Fermat geodesic \cite[Theorem 13]{CJM10b}, 
the Morse relations for lightlike geodesics in conformally standard stationary spacetimes follow.
\begin{thm}\label{Morseluce}
Let $(S\times\R,g)$ be a globally hyperbolic conformally standard stationary spacetime with $g$ as in \eqref{e1conf}, $p=(p_0,t_0)\in S\times\R$ and $L_{q_0}=\{(q_0,s)\in S\times\R:s\in\R\}$. Assume that for each $s\in\R$ the points $p$ and $(q_0,s)$ are non-conjugate along every future-pointing lightlike geodesic connecting them. Then there exists a formal series $Q(r)$ with coefficients in $\N\cup\{+\infty\}$ such that
\begin{equation*}\label{relazioni}
\sum_{z\in G_{p,L_{q_0}}}r^{\mu(z)}=P(r,\Lambda_{(p_0,q_0)}(S))+(1+r)Q(r),
\end{equation*}
where  $G_{p,L_{q_0}}$ is the set of all the future-pointing lightlike geodesics connecting $p$ to $L_{q_0}$, $\mu(z)$ is the number of conjugate points of $z$ counted with multiplicity and $P(r,\Lambda_{(p_0,q_0)}(S))$ is the Poincar\'e polynomial of $\Lambda_{(p_0,q_0)}(S)$.
\end{thm}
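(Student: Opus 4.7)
The plan is to reduce the lightlike Morse theory on $(S\times\R,g)$ to the Morse theory of the energy functional of the Fermat metric on the path space $\Lambda_{(p_0,q_0)}(S)$, and then identify the indices on the two sides. By Proposition \ref{ppp}, every future-pointing lightlike pregeodesic from $p=(p_0,t_0)$ to $L_{q_0}$ is, after a constant-speed reparametrization, a unit-speed $F$-geodesic from $p_0$ to $q_0$; so $G_{p,L_{q_0}}$ is in bijection with the set $G_F$ of $F$-geodesics between these spatial points. By Proposition \ref{regularity} (with $N=\{(p_0,q_0)\}$), these are exactly the non-constant critical points of $E_F$ on $\Lambda_{(p_0,q_0)}(S)$. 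Global hyperbolicity together with Theorem \ref{causalstructure}(b) gives the relative compactness of $B_F^+(x,r)\cap B_F^-(x,r)$ for every $x\in S$ and $r>0$, whence Theorem \ref{ps} ensures that $E_F$ satisfies the Palais--Smale condition on $\Lambda_{(p_0,q_0)}(S)$.

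The main obstacle is that the standard Morse Lemma cannot be applied directly to $E_F$ on the Hilbert manifold of $H^1$-curves, since $F^2$ fails to be $C^2$ on the zero section, so $E_F$ is in general only $C^{1,1}$ even at a geodesic. The strategy, as outlined in the paragraph preceding the statement, is to work on the Banach manifold $\Omega^{(p_0,q_0)}_{C^1}(S)$ of $C^1$-curves with fixed endpoints, which is densely and continuously immersed in $\Lambda_{(p_0,q_0)}(S)$. One shows that the second differential of $E_F\vert_{C^1}$ at a regular critical point extends by density to an operator on $H^1$ that, in a suitable scalar product, has the form identity plus compact, whose restriction to $C^1$ is invertible. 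From this one obtains a Morse Lemma valid in the $C^1$-model and then transfers it to $H^1$ using Palais's theorem on invariance of critical groups under dense inclusions of Banach manifolds; the non-conjugacy hypothesis at the endpoints guarantees non-degeneracy of each critical point, so that its critical group is free of rank one, concentrated in degree $\mu_F(\gamma)$ equal to the number of $F$-conjugate points along $\gamma$ counted with multiplicity.

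With Palais--Smale and the Morse Lemma in hand, the standard machinery (Morse inequalities on a Hilbert manifold with the homotopy type of $\Lambda_{(p_0,q_0)}(S)$) yields a formal series $Q(r)$ with coefficients in $\N\cup\{+\infty\}$ such that
\begin{equation*}
\sum_{\gamma\in G_F}r^{\mu_F(\gamma)} = P\bigl(r,\Lambda_{(p_0,q_0)}(S)\bigr)+(1+r)Q(r).
\end{equation*}
The last step is to recognize that each summand on the left matches the contribution of the corresponding lightlike geodesic in $G_{p,L_{q_0}}$. This is precisely the index equality $\mu(z)=\mu_F(\pi\circ z)$ between the geometric index of a future-pointing lightlike geodesic in $(S\times\R,g)$ and that of its Fermat projection, which is \cite[Theorem 13]{CJM10b}. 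Substituting this into the displayed identity produces the Morse relation stated in Theorem \ref{Morseluce}. The delicate parts, which I would simply quote from the references rather than redo, are the $C^1$--$H^1$ Morse Lemma of \cite{CJM10b} and the index-matching theorem from the same paper; the remainder of the argument is routine translation through Fermat's principle.
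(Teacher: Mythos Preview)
Your proposal is correct and follows exactly the strategy the paper adopts: the paper's own proof is just the citation ``See \cite[Theorem 15]{CJM10b}'', and the paragraph preceding the statement sketches precisely the $C^1$--$H^1$ Morse Lemma argument and the index identification \cite[Theorem 13]{CJM10b} that you spell out. You have reconstructed the intended argument accurately, including the use of Theorem \ref{causalstructure}(b) and Theorem \ref{ps} to secure Palais--Smale, and the reduction via Proposition \ref{ppp}.
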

\begin{proof}
See \cite[Theorem 15]{CJM10b}. 
\end{proof}
A Gromoll-Meyer type theorem to obtain a multiplicity results for geodesics joining two points $p$ and $q$ that excludes the case in that
all the geodesics come from the iterations of a closed geodesic that goes through $p$ and $q$
(as in the round sphere) is obtained in \cite{CaJa11} in the case that $p$ and $q$ are non-conjugate.

Let us observe that even if the problem of existence of normal geodesics between two arbitrary
submanifolds in a standard stationary spacetime cannot be reduced to a problem for the Fermat metric
in general, in \cite{BCC10}, the authors use
completeness of the Fermat metric to prove a result of this type with some hypotheses in the submanifolds \cite[Theorem 1.1]{BCC10}.

\subsection{$t$-periodic lightlike geodesics and the closed geodesic problem}
Let us recall that a lightlike geodesic $\gamma=(x,t):\R\rightarrow S\times\R$  in a standard stationary spacetime is said $t$-periodic if there exists $T\geq 0$ and $s_0>0$ such that $x$ is periodic, that is, $x$ and its derivatives coincide in $0$ and $s_0$,
$t(s_0)=t(0)+T$ and $\dot t(s_0)=\dot t(0)$. In this case, $T$ is called the {\em universal period}.
 They are related with closed geodesics for the Fermat metric.
\begin{prop} Let $(M,g)$ be a conformally standard stationary spacetime as in \eqref{e1conf}. Then $\gamma=(x,t):\R\rightarrow S\times\R$ is a $t$-periodic lightlike geodesic if and only if 
$x:\R\rightarrow S$ is a closed geodesic of the Fermat metric. 
\end{prop}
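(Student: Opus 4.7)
The plan is to apply Fermat's principle (Theorem \ref{fermatprinciple}) directly, after reducing to the normalized standard stationary case. First I would exploit the fact, recalled earlier in the paper, that lightlike geodesics and their parametrizations are preserved up to reparametrization under the conformal change $g\mapsto \frac{1}{\varphi}g$, and that the Fermat metric $F$ associated to \eqref{e1conf} coincides with the one associated to the normalized metric \eqref{e1} (the conformal factor $\varphi$ does not enter \eqref{fermatmetric}). Moreover, $t$-periodicity is a condition about the coordinate $t$ and about the spatial curve $x$ being closed in $S$, and neither depends on an affine reparametrization of $\gamma$. So without loss of generality I may assume $g$ is as in \eqref{e1}.

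For the direct implication, assume $\gamma=(x,t)$ is a $t$-periodic lightlike geodesic with universal period $T$ and spatial period $s_0>0$. By definition of $t$-periodicity, $x(s_0)=x(0)$ and $\dot x(s_0)=\dot x(0)$; by Fermat's principle $x$ is a Fermat geodesic (parametrized with constant $h$-Riemannian speed), so by uniqueness of geodesics determined by initial position and velocity, $x$ extends periodically and is a closed geodesic of $F$.

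For the converse, start from a closed geodesic $x:\R\to S$ of $F$ with period $s_0$, parametrized by constant $h$-Riemannian speed (one can always rescale the parameter to achieve this). Define
\[
t(s)=t_0+\int_0^s F(\dot x)\,d\nu.
\]
By Theorem \ref{fermatprinciple}, the curve $\gamma(s)=(x(s),t(s))$ is then a lightlike geodesic of $(S\times\R,g)$. To check $t$-periodicity I set $T=\int_0^{s_0}F(\dot x)\,d\nu$, which is nonnegative (and in fact positive, since $\dot x$ never vanishes along a geodesic and $F$ is positive off the zero section). Then $t(s_0)=t(0)+T$ follows from the definition of $t$, and $\dot t(s_0)=F(\dot x(s_0))=F(\dot x(0))=\dot t(0)$ follows from $\dot x(s_0)=\dot x(0)$. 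Since also $x$ itself is periodic of period $s_0$ by hypothesis, all four defining conditions of $t$-periodicity are verified.

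The only mildly delicate point, and the one I would treat with care, is the matching of parametrizations: Fermat's principle yields the spatial projection parametrized with constant $h$-speed, whereas an abstract ``closed geodesic'' of $F$ is just a periodic Finsler geodesic, a priori parametrized with constant $F$-speed. Since in both cases the speed is constant along the curve, passing from one parametrization to the other is a global affine rescaling of the parameter, which preserves the property of being a closed geodesic and only rescales $s_0$ and $T$; this rescaling is transparent in the definition of $t$-periodicity. Beyond this bookkeeping the argument is a one-line application of Fermat's principle in each direction.
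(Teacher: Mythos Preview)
Your argument is clean and essentially correct once $g$ is genuinely as in \eqref{e1}, but the reduction step ``without loss of generality I may assume $g$ is as in \eqref{e1}'' is not justified, and this is precisely the point where your proof diverges from the paper's.

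The conformal change $g\mapsto\frac{1}{\varphi}g$ does preserve null geodesics, but only up to a reparametrization that is \emph{not} affine: the two affine parameters are related by $ds/dr \propto 1/\varphi(\gamma(r))$, and $\varphi$ is an arbitrary positive function on $S\times\R$, so this ratio varies along the curve. Your sentence ``neither depends on an affine reparametrization of $\gamma$'' is true but beside the point here. Concretely, in the converse direction your curve $\gamma=(x,t)$ built from Theorem~\ref{fermatprinciple} is a lightlike geodesic of the \emph{normalized} metric, hence only a lightlike \emph{pre}geodesic of $g$; after reparametrizing to a genuine $g$-geodesic one obtains, at the first spatial return, $\dot\gamma(r_0)=\mu\,\dot\gamma(0)$ with a factor $\mu$ governed by the values of $\varphi$ at the two (distinct) spacetime points $(x_0,t_0)$ and $(x_0,t_0+T)$, and there is no a priori reason this should equal $1$. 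The same reparametrization issue affects your forward implication: in the $g$-affine parameter, $x$ is only a Fermat pregeodesic, so the appeal to ``uniqueness of geodesics determined by initial position and velocity'' for $x$ needs an extra step.

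The paper does not pass to the normalized metric. For the delicate implication it works directly with the $g$-geodesic $\gamma$ and exploits a conservation law: since $\partial_t$ is a \emph{conformal} Killing field and $\dot\gamma$ is null, the conformal term in $\mathcal L_{\partial_t}g=\lambda g$ drops out and one gets
\[
\frac{d}{ds}\,g(\dot\gamma,\partial_t)=g(\dot\gamma,\nabla_{\dot\gamma}\partial_t)=\tfrac{1}{2}\lambda\,g(\dot\gamma,\dot\gamma)=0,
\]
so $g(\dot\gamma,\partial_t)$ is constant (and nonzero, as $\dot\gamma$ is null and $\partial_t$ timelike). This conserved quantity is then used to rule out the possible rescaling $\dot\gamma(s_0)=\mu\dot\gamma(0)$ with $\mu\neq 1$ that arises from Proposition~\ref{ppp}. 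That conservation-law step is the missing ingredient in your write-up; once you are in the honestly standard stationary case ($\varphi\equiv 1$) your argument and the paper's coincide.
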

\begin{proof}
The implication to the right follows from Proposition \ref{ppp}. For the other one, first observe that
$g(\dot\gamma,\partial_t)$ is constant. To see this, recall that as $\partial_t$ is a conformal field, it satisfies
\begin{equation}\label{conformaleq}
g(\nabla_V\partial_t,W)+g(\nabla_W\partial_t,V)=\lambda g(V,W)
\end{equation}
for every $V,W\in \mathfrak{X}(M)$, where $\nabla$ is the Levi-Civita connection of $(M,g)$
and $\lambda$ a positive function on $M$.
Then, using that $\gamma$ is a lightlike geodesic and \eqref{conformaleq},
\[\frac{d}{ds}g(\dot\gamma,\partial_t)=g(\dot\gamma,\nabla_{\dot\gamma}\partial_t)=\frac 12 \lambda g(\dot\gamma,\dot\gamma)=0.\]
Using again Proposition \ref{ppp}, we deduce that there exists $s_0>0$ such that $\gamma(s_0)=\gamma(0)$ and $\dot\gamma(s_0)=\mu\dot\gamma(0)$ for some $\mu>0$, but the fact that  $g(\dot\gamma,\partial_t)$ is
a non null constant (it cannot be zero because $\dot\gamma$ is lightlike and $\partial_t$ timelike) implies that $\mu=1$ as required.
\end{proof}

As to the closed geodesic problem, most of the classical results for Riemannian metrics, such as
Gromoll-Meyer and Bangert-Hingston theorems, are available in the Finslerian setting, obtaining the corresponding results of multiplicity for $t$-periodic lightlike geodesics in 
conformally standard  stationary spacetimes (see \cite{BiJa11} and references therein). As an exception, there are Finsler metrics with a finite number of geometrically distinct closed geodesics, the so-called Katok metrics. Remarkably these Finsler metrics are of Randers type and they have constant flag curvature. Let us observe that the classification of Randers metrics of
constant flag curvature has been obtained in \cite{BRS04} using the expression of a Randers metric as a Zermelo one, that is, a metric defined from a Riemannian metric $g$ and a vector field $W$ in a manifold $M$ as
\[Z(v)=\sqrt{\frac{1}{\lambda}g(v,v)+\frac{1}{\lambda^2}g(v,W)^2}-\frac{1}{\lambda}g(v,W),\]
where $\lambda=1-g(W,W)$ must be positive. Indeed, $(M,Z)$ has constant flag curvature if and only if $W$ is a homothety and $g$ has constant curvature \cite{BRS04}.
 We can then
construct standard stationary spacetimes with compact orbit manifold $S$ and a finite number of geometrically distinct $t$-periodic light rays (see  \cite[Propositions 3.1 and 3.4]{BiJa11}). 

\subsection{Alternative functional to energy}
Existence and multiplicity of Fermat geodesics can be studied by means of other functionals
rather than the energy one. In \cite{FGM95}, the authors use the functional defined as 
\begin{equation*}
J(x)=\sqrt{\int_0^1 h(\dot x,\dot x)\df s}+\int_0^1 \omega(\dot x)\df s
\end{equation*}
for every curve $x:[0,1]\rightarrow S$ of class $H^1$ with $h$ as in \eqref{perfermatmetric}. The advantage of this functional is that
it is $C^2$ on geodesics. Its critical points are Fermat geodesics parametrized with $h$-constant 
speed (see also \cite{Mas09}). This functional has also been used in \cite{GGP09,GiJa10} to obtain
a result of genericity of stationary spacetimes without conjugate lightlike geodesics between a fix event $p$ and a fix vertical line.
\section{Further applications}
\subsection{Randers metrics of constant flag curvature and stationary spacetimes}
Constant flag curvature plays the same role in Finsler geometry as sectional curvature in 
the Riemannian setting, that is, it is an important invariant related to the behaviour of geodesics.
Let us recall that Randers metrics with constant flag curvature have been classified in \cite{BRS04}. These metrics have already appeared in the context of Fermat metrics to provide examples
of spacetimes with a finite number of geometrically distinct $t$-periodic lightlike geodesics \cite[Propositions 3.1 and 3.4]{BiJa11}. Subsequently, these spacetimes were studied in \cite{GHWW09}. 
\begin{prop}
Let $(S\times \R,g)$ be a conformally standard stationary spacetime as in \eqref{e1conf} 
whose Fermat metric
is of constant flag curvature. Then $(S\times \R,g)$ is locally conformally flat.
\end{prop}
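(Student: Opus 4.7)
The plan is to invoke the Bao--Robles--Shen classification of Randers metrics of constant flag curvature to put the Fermat metric in Zermelo form over a background of constant sectional curvature with a homothetic vector field, to rewrite the spacetime in these data, and then to recognise the resulting metric as a Robertson--Walker type, whose local conformal flatness is classical.

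Since local conformal flatness and the lightlike structure are conformally invariant, I would first normalise the spacetime so that it takes the form \eqref{e1}. By the classification quoted just above the proposition, the Fermat metric $F$ admits a Zermelo representation from a pair $(a,W)$ where $a$ is a Riemannian metric of constant sectional curvature on $S$ and $W$ is a homothety of $a$, i.e.\ $\mathcal{L}_W a=c\,a$ for some constant $c$. Comparing the Randers expression \eqref{fermatmetric} of $F$ with its Zermelo form identifies $g_0=a/\lambda$ and $\omega(\cdot)=-a(\cdot,W)/\lambda$, with $\lambda:=1-a(W,W)>0$. Substituting into \eqref{e1} and using $a(W,W)=1-\lambda$, a short calculation gives
\begin{equation*}
g((v,\tau),(v,\tau))\;=\;\frac{1}{\lambda}\bigl(a(v-\tau W,v-\tau W)-\tau^2\bigr),
\end{equation*}
so it suffices to show that $\tilde g:=a(v-\tau W,v-\tau W)-\tau^2$ is locally conformally flat.

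To eliminate the cross term, on a coordinate neighbourhood where the local flow $\phi_s$ of $W$ is defined I would apply the diffeomorphism $\Psi(x,t):=(\phi_t(x),t)$. A direct computation of $d\Psi$ together with the homothety relation $\phi_t^*a=e^{ct}a$ yields
\begin{equation*}
\Psi^*\tilde g\;=\;e^{ct}\,a\;-\;dt^2.
\end{equation*}
This is a Lorentzian warped product of a constant sectional curvature Riemannian metric with the time axis; when $c=0$ it is already a product, and when $c\neq 0$ the change of variable $d\sigma=e^{-ct/2}\,dt$ rewrites it as $e^{ct}(a-d\sigma^2)$, conformal to a Lorentzian product of a constant sectional curvature space with a flat line. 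Such Robertson--Walker type metrics are classically locally conformally flat (the warping factor over a one-dimensional base kills the Weyl obstruction). Composing the conformal factors $1/\lambda$ and $e^{ct}$ then delivers local conformal flatness of $g$.

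The main obstacle is little more than bookkeeping: one must carefully verify the translation between the Randers and Zermelo data and accept that the diffeomorphism $\Psi$ exists only on open sets where the flow of $W$ is defined, but since the statement itself is local this causes no difficulty. The substantive inputs are the Bao--Robles--Shen classification and the classical conformal flatness of products (or warped products over a line) with one factor of constant sectional curvature.
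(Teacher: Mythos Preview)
Your argument is correct and is essentially the approach of the reference \cite{GHWW09} that the paper cites: pass to the Zermelo data via the Bao--Robles--Shen classification, rewrite the stationary metric in terms of $(a,W)$, and straighten out the cross term by the flow of the homothety to obtain a Robertson--Walker type metric over a constant curvature base, which is classically locally conformally flat. Your algebraic identifications $g_0=a/\lambda$, $\omega=-a(\cdot,W)/\lambda$, the identity $g=\lambda^{-1}\bigl(a(v-\tau W,v-\tau W)-\tau^2\bigr)$, and the pullback computation $\Psi^*\tilde g=e^{ct}a-dt^2$ all check out, and your remark that the flow $\phi_t$ need only exist locally is exactly what the local nature of the statement requires.
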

\begin{proof}
See \cite[Section II.E.2]{GHWW09}.
\end{proof}
The converse of the last proposition is not true in general, because for example, you can find a Randers metric of the form $\sqrt{g_0}+df$ with $g_0$ the Euclidean metric in $\R^n$ and $f:\R^n\rightarrow \R$ a smooth function, such that $\sqrt{g_0}+df$ does not have constant flag curvature (see \cite[Section 3.9 B]{BaChSh00}), and $\sqrt{g_0}+df$ is the Fermat metric associated to a certain splitting of Minkowski spacetime (see Proposition \ref{F+df}). Anyway it is expectable, as commented in \cite[Section II.E.2]{GHWW09}, 
that given a conformally flat stationary spacetime, you can find a spacelike section having as a Fermat metric a Randers metric with constant flag curvature.
Let us point out that in \cite{GHWW09}, the authors give several examples of Randers metrics coming from well-known stationary spacetimes. They also interpret Randers metrics as magnetic flows.
\subsection{Timelike geodesics with fixed proper time}
Let us also remark that existence of timelike geodesics with fixed proper time between an event and a vertical line in a standard stationary spacetime can be reduced to existence of lightlike geodesics in a one-dimensional higher standard stationary spacetime. Observe that, in this case, as timelike geodesics are not preserved by conformal changes, we cannot consider conformally standard stationary spacetimes as in \eqref{e1conf}, but standard stationary spacetimes $(S\times\R,g)$ such that
\[g((v,\tau),(v,\tau))= g_0(v,v)+2\omega(v)\tau-\beta \tau^2,\]
in $(x,t)\in S\times \R$ for any $(v,\tau)\in T_xS\times \R$, where $\omega$ and $g_0$ are respectively
a one-form and a Riemannian metric on $S$ and $\beta$ is a positive function on $S$. Then we can 
define the one-dimensional higher spacetime $(S\times \R^2,\eta)$, with $\eta$ defined as
\[\eta((v,y,\tau),(v,y,\tau))= g_0(v,v)+y^2+2\omega(v)\tau-\beta \tau^2,\]
in $(x,\nu,t)\in S\times\R^2$, where $(v,y,\tau)\in  T_xS\times\R^2$. A curve from the event $(x_0,t_0)\in S\times \R$ to the line $L_{x_1}=\{(x_1,s)\in S \times \R:s\in\R\}$
is a  timelike geodesic $\gamma=(x,t):[0,1]\rightarrow S\times \R$ of $(S\times \R,g)$
 with proper time $T$ if and only if $[0,1]\ni s\to (x(s),s,t(s))\in S\times \R^2$ is a lightlike
geodesic of $(S\times \R^2,\eta)$ from the event $(x_0,0,t_0)$ to the line $\{(x_1,T,s)\in S\times\R^2:s\in\R\}$. Moreover, the Fermat metric of this standar stationary spacetime is
given as
\[\tilde{F}(v,y)=\sqrt{\frac{1}{\beta}g_0(v,v)+\frac{v^2}{\beta}+\frac{1}{\beta^2}\omega(v)^2}+\frac{1}{\beta}\omega(v),\]
in $(x,\nu)\in S\times \R$, where $(v,y)\in T_xS\times \R$. As completeness conditions for the original Fermat metric in \eqref{fermatmetric}, which in this case is expressed as
\begin{equation*}\label{fermatmetricbeta}
F(v)=\sqrt{\frac{1}{\beta}g_0(v,v)+\frac{1}{\beta^2}\omega(v)^2}+\frac{1}{\beta}\omega(v),
\end{equation*}
 imply completeness conditions for $\tilde{F}$ (see the proof of \cite[Proposition 4.2]{CJM10b}), some multiplicity results \cite[Proposition 4.2]{CJM11} and Morse relations \cite[Theorem 18]{CJM10b} are available when the spacetime is globally hyperbolic. 
\subsection{Conformal maps and almost isometries}
Another interesting relation between Fermat metrics and conformally standard stationary spacetimes
occurs at the level of transformations (see \cite{JaLiPi11}). As Fermat metrics remain invariant by conformal changes in the conformally stationary spacetime, we need to consider conformal maps in the spacetime. Moreover, as we want to project these maps  into maps of the orbit manifold $S$, they have to preserve the conformal vector field $K$. Summing up, they have to be $K$-conformal maps, denoted by ${\rm Conf}_K(S\times\R,g)$, that is, they must preserve the metric up to a positive constant in every point and the conformal vector field $K$. As to General Relativity, these maps are precisely those that preserve the causal structure and the observers along $K$. Their counterpart in Fermat metrics are the so-called {\em almost isometries}, which are maps $\varphi:S\rightarrow S$ such that $\varphi^*(F)=F+df$ for a certain smooth function $f:S\rightarrow\R$ (here
$*$ denotes the pullback operator). Let us denote by $\widetilde{\rm Iso}(S,F)$ the group of almost isometries of $(S,F)$, which is a Lie group \cite{JaLiPi11}. 
\begin{thm}
Let $\psi:S\times \R\rightarrow S\times \R$ be a $K$-conformal map of a conformally standard stationary spacetime as in \eqref{e1conf}. Then there exist functions $\varphi:S\rightarrow S$ and
$f:S\rightarrow \R$ such that $\psi(x,t)=\big(\varphi(x),t+f(x)\big)$ and $\varphi$ is an almost isometry for the Fermat metric of $(S\times \R,g)$. Moreover, $\varphi$ is a Riemannian isometry for the metric $h$ in \eqref{perfermatmetric}
and the map $\pi: {\rm Conf}_K(S\times\R,g)\rightarrow \widetilde{\rm Iso}(S,F)$, defined as
$\pi(\psi)=\varphi$, is a Lie group homomorphism. The map can be projected to the quotient
\[\bar{\pi}: {\rm Conf}_K(S\times\R,g)/{\mathcal K}\rightarrow \widetilde{\rm Iso}(S,F),\]
(where ${\mathcal K}$ is the subgroup generated by the flow of $K$) and gives an isomorphism of Lie groups.
\end{thm}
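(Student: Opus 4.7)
The plan is to normalize the spacetime, extract $\varphi$ and $f$ from the structural equations, promote the relation $\varphi^*F=F+df$ via Fermat's principle, and finally invoke the Stationary-to-Randers bijection \eqref{eee} for surjectivity. As a preliminary reduction, lightlike geodesics, conformal vector fields, the Fermat metric and the notion of $K$-conformal map are all invariant under the conformal change $g\mapsto\tfrac{1}{\varphi}g$, so I may assume that $g$ is in normalized form \eqref{e1} with $K=\partial_t$ a unit Killing field. Writing $\psi(x,t)=(\psi_1(x,t),\psi_2(x,t))$, the condition $\psi_*\partial_t=\partial_t$ forces $\partial_t\psi_1\equiv 0$ and $\partial_t\psi_2\equiv 1$, so $\psi(x,t)=(\varphi(x),t+f(x))$ for smooth $\varphi:S\to S$ and $f:S\to\R$.

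The key identity $\varphi^*F=F+df$ would follow from Fermat's principle. Given $v\in T_xS$, pick a smooth curve $c$ on $S$ with $\dot c(0)=v$ and define $t(s)$ by $t(0)=0$, $\dot t=F(\dot c)$; formula \eqref{tempo} says $\gamma=(c,t)$ is future-pointing lightlike. Since conformal diffeomorphisms preserving time-orientation send future-pointing lightlike curves to future-pointing lightlike curves, $\psi\circ\gamma=(\varphi\circ c,\,t+f\circ c)$ is again future-pointing lightlike, so \eqref{tempo} applied to it yields $\dot t+df(\dot c)=F(\varphi_*\dot c)$ at $s=0$; hence $F(\varphi_*v)=F(v)+df(v)$. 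Using the identity $\sqrt{h(v,v)}=\tfrac12(F(v)+F(-v))$ recorded in the remark on Perlick's notation, together with $df(-v)=-df(v)$,
\[
\sqrt{h(\varphi_*v,\varphi_*v)}=\tfrac12\bigl(F(\varphi_*v)+F(-\varphi_*v)\bigr)=\tfrac12\bigl(F(v)+F(-v)\bigr)=\sqrt{h(v,v)},
\]
so $\varphi$ is a Riemannian isometry of $h$. That $\pi$ is a homomorphism is then a direct computation: for $\psi_i(x,t)=(\varphi_i(x),t+f_i(x))$, $i=1,2$, one has $\psi_2\circ\psi_1(x,t)=(\varphi_2\circ\varphi_1(x),\,t+f_1(x)+f_2(\varphi_1(x)))$, hence $\pi(\psi_2\circ\psi_1)=\varphi_2\circ\varphi_1$. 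If $\varphi=\mathrm{id}$, the identity $\varphi^*F=F+df$ forces $df=0$, so $f$ is constant and $\ker\pi={\mathcal K}$.

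The main obstacle is surjectivity of $\bar\pi$, where \eqref{eee} does the real work. Given $\varphi\in\widetilde{\rm Iso}(S,F)$ with $\varphi^*F=F+df$, consider $\Phi(x,t)=(\varphi(x),t)$. A direct computation shows $\Phi^*g$ is a normalized standard stationary metric whose Fermat metric equals $\varphi^*F=F+df$. On the other hand, Proposition \ref{F+df} shows that $\psi_{-f}^*g$ is also normalized standard stationary with Fermat metric $F-d(-f)=F+df$. Since the correspondence \eqref{eee} between normalized standard stationary metrics on $S\times\R$ and Randers metrics on $S$ is bijective, the two metrics coincide: $\Phi^*g=\psi_{-f}^*g$. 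Setting $\psi:=\Phi\circ\psi_f$, one then obtains
\[
\psi^*g=\psi_f^*\Phi^*g=\psi_f^*\psi_{-f}^*g=(\psi_{-f}\circ\psi_f)^*g=g,
\]
so $\psi$ is even an isometry (a fortiori $K$-conformal), with $\pi(\psi)=\varphi$. This yields the bijection $\bar\pi$; its being a Lie group isomorphism then follows from the smooth dependence of $f$ on $\varphi$ (determined up to an additive constant by $df=\varphi^*F-F$) and the standard fact that continuous homomorphisms between Lie groups are smooth.
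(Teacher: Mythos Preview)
The paper itself does not prove this theorem; it defers entirely to \cite{JaLiPi11}. Your argument is a correct self-contained proof built from the tools the review has already assembled (equation~\eqref{tempo} for the lightlike condition, the identity $\sqrt{h}=\tfrac12(F+F(-\cdot))$, Proposition~\ref{F+df}, and the bijection~\eqref{eee}), so there is no in-paper proof to compare it against directly. The surjectivity step via~\eqref{eee}, identifying $\Phi^*g$ with $\psi_{-f}^*g$ because they share the same Fermat metric $F+df$, is clean and is exactly the kind of use the Stationary-to-Randers correspondence is designed for.

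Two small remarks. First, in deducing $\ker\pi=\mathcal K$ you pass from $df=0$ to $f$ constant, which needs $S$ connected; this is a standard standing hypothesis but worth stating. Second, the closing sentence about the Lie group isomorphism is slightly misdirected: you do not need ``smooth dependence of $f$ on $\varphi$''. Once $\bar\pi$ is a bijective homomorphism and both $\mathrm{Conf}_K(S\times\R,g)/\mathcal K$ and $\widetilde{\rm Iso}(S,F)$ are Lie groups (the latter is asserted in the text, again citing \cite{JaLiPi11}), continuity of $\pi$ (which is essentially restriction to a slice followed by projection) yields smoothness of $\bar\pi$ by the automatic-smoothness theorem you invoke, and a bijective smooth homomorphism between second-countable Lie groups is a Lie isomorphism by the open mapping theorem for locally compact groups. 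Your conclusion is right; the $f$-dependence clause can simply be dropped.
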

\begin{proof}
See \cite{JaLiPi11}.
\end{proof}
 As a consequence of this relation, it follows a result of genericity of stationary spacetimes with
 discrete $K$-conformal group.
 \begin{cor}
Given a manifold $S$, for a \emph{generic} 
set of data $(g_0,\omega)$, the stationary metric $g=g(g_0,\omega)$
given in \eqref{e1}
on $S\times \R$ has discrete $K$-conformal group $\mathrm{Conf}_K(S\times\R,g)$.
\end{cor}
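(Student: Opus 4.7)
The plan is to combine the Lie-group isomorphism $\bar{\pi}$ supplied by the preceding theorem with a classical genericity statement for Riemannian metrics. Since $\mathcal{K}$ is a one-dimensional connected subgroup, the quotient $\mathrm{Conf}_K(S\times\R,g)/\mathcal{K}\cong\widetilde{\rm Iso}(S,F)$ is discrete exactly when the full $K$-conformal group has no continuous symmetries beyond the obvious $t$-translations generated by $K$; this is what "discrete" must be read to mean in the statement. Moreover, the same theorem shows that any $\varphi\in\widetilde{\rm Iso}(S,F)$ is a Riemannian isometry of the auxiliary metric $h$ in \eqref{perfermatmetric}, so that the identity component of $\widetilde{\rm Iso}(S,F)$ embeds in the identity component of $\mathrm{Iso}(S,h)$. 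Hence it suffices to prove that, for a generic pair $(g_0,\omega)$, the isometry group $\mathrm{Iso}(S,h(g_0,\omega))$ has trivial identity component.

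To carry this out, I would first fix a Whitney $C^k$ topology on the space of admissible pairs $(g_0,\omega)$, with respect to which it is a Baire space, and observe that the assignment $(g_0,\omega)\mapsto h=g_0+\omega\otimes\omega$ is continuous. Restricting to the slice $\{\omega=0\}$ already realizes every Riemannian metric on $S$, so arbitrary $C^k$-small perturbations of $h$ are achieved by perturbations of $(g_0,\omega)$ alone. I would then invoke the classical fact that the Riemannian metrics on $S$ whose isometry group has trivial identity component form a residual subset in this topology, and pull it back through the above map. The preimage is residual in the space of pairs, and on it $\mathrm{Iso}(S,h)$, and \emph{a fortiori} $\widetilde{\rm Iso}(S,F)$, is discrete; applying $\bar{\pi}^{-1}$ yields the stated discreteness of $\mathrm{Conf}_K(S\times\R,g)$ modulo $\mathcal{K}$.

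The main obstacle is the Riemannian genericity input. The standard approach starts from a nonzero Killing field $X$ of some metric $h_0$ and a point $p$ with $X(p)\neq 0$, and perturbs $h_0$ by a bump function supported in a small neighborhood of $p$ whose support is not preserved by the flow of $X$; by Myers--Steenrod the isometry group of the perturbed metric is a compact Lie subgroup of $\mathrm{Iso}(S,h_0)$, and it must be strictly smaller because $X$ no longer integrates to a one-parameter family of isometries. Iterating through a countable basis of neighborhoods and combining via a Baire category argument yields the residual set. Once this Riemannian genericity is accepted, the translation back to the stationary setting is purely formal through the theorem, so the heart of the work is encapsulated in that classical Riemannian input and the soft functorial properties of the Stationary-to-Randers correspondence.
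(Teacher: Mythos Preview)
The paper gives no proof here; it simply refers to \cite{JaLiPi11}. Your overall strategy---reduce via the isomorphism $\bar\pi$ to discreteness of $\widetilde{\rm Iso}(S,F)$, use the inclusion $\widetilde{\rm Iso}(S,F)\subseteq\mathrm{Iso}(S,h)$ supplied by the preceding theorem, and then invoke Riemannian genericity for $h$---is sound and is plausibly the route taken in the cited reference. Your reading of ``discrete'' as ``discrete modulo the one-parameter group $\mathcal K$'' is correct and unavoidable, since the flow of $K$ always lies in $\mathrm{Conf}_K$. The pullback step is also fine: for each fixed $\omega$ the assignment $g_0\mapsto g_0+\omega\otimes\omega$ is a homeomorphism onto its image, so the preimage under $(g_0,\omega)\mapsto h$ of an open dense set of metrics is open and dense in the space of pairs.

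There is, however, a genuine error in your sketch of the Riemannian input. You write that after a bump perturbation of $h_0$, ``by Myers--Steenrod the isometry group of the perturbed metric is a compact Lie subgroup of $\mathrm{Iso}(S,h_0)$.'' Myers--Steenrod only tells you the isometry group is a Lie group (compact when $S$ is compact, which is not assumed here); it gives no containment of the perturbed isometry group in the original one, and in general no such containment holds---a perturbation can create isometries that $h_0$ did not have. Your dimension-drop-and-iterate scheme therefore does not go through as written. The fact you need (that metrics with no nontrivial Killing field form an open dense set) is nonetheless classical: openness comes from an Arzel\`a--Ascoli argument on normalized Killing fields, while density uses upper semicontinuity of the dimension of the isometry group together with a more careful perturbation, or alternatively Ebin's slice theorem for the action of the diffeomorphism group on the space of metrics. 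Once you either repair the sketch along these lines or simply cite the Riemannian result as a black box, your argument is complete.
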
 
\begin{proof}
See \cite{JaLiPi11}.
\end{proof}

\end{document}